\documentclass[12pt,reqno]{amsart}
\pdfoutput=1
\usepackage[margin=1in]{geometry}
\usepackage{amsmath,amssymb,amsthm}

\usepackage{tikz}
\usepackage{enumitem}
\usepackage[T1]{fontenc}

\newcommand{\B}{\mathbf{B}}
\newcommand{\G}{\mathbf{G}}

\newcommand{\X}{\mathbf{X}}
\newcommand{\Q}{\mathbb{Q}}

\newcommand{\Zpos}{\mathbb{Z}_{>0}}
\renewcommand{\d}{\partial}
\newcommand{\til}{\widetilde}
\renewcommand{\l}{\lambda}
\newcommand{\Sym}{\operatorname{Sym}}
\newcommand{\Gk}{\mathbf{\Gamma}_T^{(k)}}
\newcommand{\Ga}[1]{\mathbf{\Gamma}_T^{(#1)}}
\newcommand{\tensor}{\otimes}
\newcommand{\verts}[1]{\overline{#1}}
\newcommand{\nlverts}[1]{\widehat{#1}}

\newtheorem{theorem}{Theorem}[section]
\newtheorem{proposition}[theorem]{Proposition}

\newtheorem{lemma}[theorem]{Lemma}
\theoremstyle{definition}

\newtheorem{example}[theorem]{Example}
\newtheorem{defn}[theorem]{Definition}

\newtheorem{question}[theorem]{Question}
\theoremstyle{remark}

\usepackage[doi=false,isbn=false,url=false,eprint=false,maxnames=50]{biblatex}
\addbibresource{refs.bib}
\usepackage{url}

\numberwithin{equation}{section}
\title{Generalized degree polynomials of trees}

\author{Ricky Ini Liu}
\address{Department of Mathematics, University of Washington, Seattle, WA 98195}
\email{riliu@uw.edu}

\author{Michael Tang}
\address{Department of Mathematics, University of Washington, Seattle, WA 98195}
\email{mst0@uw.edu}

\date{\today}

\begin{document}

\begin{abstract}
    The \emph{generalized degree polynomial} $\mathbf{G}_T(x,y,z)$ of a tree $T$ is an invariant introduced by Crew that enumerates subsets of vertices by size and number of internal and boundary edges. Aliste-Prieto et al.\ proved that $\mathbf{G}_T$ is determined linearly by the \emph{chromatic symmetric function} $\mathbf{X}_T$, introduced by Stanley. We present several classes of information about $T$ that can be recovered from $\mathbf{G}_T$ and hence also from $\mathbf{X}_T$. Examples of such information include the \emph{double-degree sequence} of $T$, which enumerates edges of $T$ by the pair of degrees of their endpoints, and the \emph{leaf adjacency sequence} of $T$, which enumerates vertices of $T$ by degree and number of adjacent leaves. We also discuss a further generalization of $\mathbf{G}_T$ that enumerates tuples of vertex sets and show that this is also determined by $\mathbf{X}_T$.
\end{abstract}

\maketitle

\section{Introduction}

Given a simple graph $G$, the \emph{chromatic symmetric function} $\X_G$, introduced by Stanley \cite{stanley} in 1995, is a symmetric function that enumerates proper colorings of $G$ by the number of occurrences of each color. It vastly generalizes the classical chromatic polynomial $\chi_G$ of Birkhoff and is a natural graph invariant in that the mapping $G \mapsto \X_G$ defines a canonical morphism from the Hopf algebra of graphs to the ring of quasisymmetric functions \cite{abs}.

Stanley \cite{stanley} asked the following question which motivates our present work:

\begin{question} \label{q:distinguish}
    Does the chromatic symmetric function distinguish trees? In other words, must non-isomorphic trees have different chromatic symmetric functions?
\end{question}

While this question remains open, some progress has been made towards answering it. One common approach to doing so is to identify or construct special classes of trees that are distinguished by their chromatic symmetric functions, as in \cite{aliste-prieto, ganesan, loebl, wang, gonzalez}. Another approach is to begin with the chromatic symmetric function $\X_T$ of an arbitrary tree $T$ and extract information about $T$ from the coefficients of $\X_T$. For instance, Martin, Morin, and Wagner~\cite{mmw} proved that $\X_T$ determines the so-called \emph{subtree polynomial} of $T$ (see e.g.\ \cite{eisenstat}), which in turn determines the \emph{degree sequence} (the multiset of vertex degrees) of $T$. As another example, Crew \cite{crewnote} showed that $\X_T$ determines the multiset of lengths of the \emph{twigs} of $T$, which are the maximal paths in $T$ in which one end is a leaf and all other vertices have degree $2$ in $T$.

In this paper, we mainly adopt the latter approach: recovering information about $T$ from $\X_T$. In particular, we study the \emph{generalized degree polynomial} $\G_T(x,y,z)$ of a tree $T$. This invariant, introduced by Crew \cite{crewthesis}, enumerates subsets $A$ of the vertices of $T$ by size as well as number of internal edges (edges of $T$ with both endpoints in $A$) and boundary edges (edges of $T$ with exactly one endpoint in $A$). Crew conjectured in \cite{crewnote} that $\X_T$ determines $\G_T$, and this was later proven by Aliste-Prieto, Martin, Wagner, and Zamora~\cite{martin2}. Therefore, any information about $T$ determined by $\G_T$ is also information determined by $\X_T$.
In this paper, we present several examples of such information. These examples are largely organized using the concept of \emph{degree embedding numbers} $N_D(T)$, which count the number of subtrees of $T$ isomorphic to a given tree $D$ and with prescribed vertex degrees. For instance, in the case that $D$ is a single edge, we show that $\G_T$ determines, for all positive integers $a$ and $b$, the number of edges of $T$ whose endpoints have degrees $a$ and $b$; these numbers form the \emph{double-degree sequence} of $T$. In the case that $D$ is a star, we show that $\G_T$ determines (for all $d$ and $\ell$) the number of vertices of $T$ with degree $d$ that are adjacent to exactly $\ell$ leaves; these numbers form the \emph{leaf adjacency sequence} of $T$. (The full list of results can be found in Theorem~\ref{thm:degree-embedding-list}.) These degree embedding numbers $N_D(T)$ are extracted algebraically from $\G_T$, and in most cases can be written as polynomials in the coefficients of $\G_T$.

Then, in the final section, we discuss a further generalization of $\G_T$, the \emph{generalized degree polynomial of order $m$} (for positive integers $m$), denoted $\G^{(m)}_T$. This invariant enumerates tuples of $m$ disjoint subsets $A_1, \dots, A_m$ of vertices of $T$ by the sizes of each $A_i$, the number of internal edges of each $A_i$, and the total number of boundary edges among the $A_i$. We show how the polynomials $\G^{(m)}_T$ arise from the \emph{bad chromatic symmetric function} $\B_T$, a generalization of $\X_T$ that enumerates all (not necessarily proper) colorings of $T$ by the number of vertices of each color and the number of monochromatic edges of each color. Furthermore, we show that $\B_T$ is actually equivalent to $\X_T$, from which it follows that $\X_T$ determines each $\G^{(m)}_T$. The proof uses the structure of the Hopf algebra of graphs \cite[Section 12]{SCHMITT1994299} and generalizes an alternate proof we give that $\X_T$ determines $\G_T$.

This paper is organized as follows. In Section~\ref{sec:gdp}, we define the chromatic symmetric function and the generalized degree polynomial and then give an alternate proof that the former determines the latter. In Section~\ref{sec:degree-embeddings}, we define degree embeddings and present a list of degree embedding numbers that are determined by $\G_T$ (Theorem~\ref{thm:degree-embedding-list}). In Section~\ref{sec:degree-embedding-numbers}, we show how to recover these degree embedding numbers from $\G_T$. Specifically, we construct a family of two-variable rational functions $\Gk(x,y)$ that are determined by $\G_T$ (Section~\ref{subsec:Gk}), and then we use the $\Gk$ to extract the desired data about $T$ (Sections~\ref{subsec:double-degrees}--\ref{subsec:leaf-adjacency}). Finally, in Section~\ref{sec:higher-order}, we define and discuss the higher-order generalized degree polynomials $\G^{(m)}_T$.

\section{The chromatic symmetric function and generalized degree polynomial} \label{sec:gdp}

We begin by introducing notation and background information about the chromatic symmetric function and the generalized degree polynomial. For more background on symmetric functions and Hopf algebras, see e.g.\ \cite{grinberg}.

\subsection{Notation and conventions}

Let $\l = (\l_1, \dots, \l_r)$ be an integer partition, written so that $\l_1 \ge \dotsm \ge \l_r>0$. The \emph{length} of $\l$ is $\ell(\l) = r$, and the \emph{size} of $\l$ is $|\l| = \l_1 + \dotsm + \l_r$. If $|\l| = k$ for some nonnegative integer $k$, we write $\l \vdash k$.

Let $\Sym$ be the ring of symmetric functions in countably many variables $\mathbf{x} = (x_1, x_2, \dots)$ over the rational numbers $\Q$. We will primarily use the \emph{power sum basis} $\{p_\l\}$ of $\Sym$, indexed by integer partitions $\l = (\l_1, \dots, \l_r)$, defined by $p_\varnothing = 1$ for the empty partition $\varnothing$ and \[
    p_\l = p_{\l_1} \dotsm p_{\l_r}
    \qquad \text{where} \qquad
    p_k = x_1^k + x_2^k + \dotsb
\] for positive integers $k$. With the usual grading $\Sym = \bigoplus_{k \ge 0} \Sym_k$ by degree, each power sum symmetric function $p_\l$ is homogeneous of degree $|\l|$, and $\{p_\l : \l \vdash k\}$ is a basis for $\Sym_k$ as a vector space over $\Q$.

In this paper, all graphs are assumed to be simple, that is, they have no parallel edges or self-loops. For a graph $G$, we write $V(G)$ and $E(G)$ for its vertex set and edge set, respectively. The \emph{order} of $G$, denoted $|G|$, is its number of vertices. Let $c(G)$ denote the number of connected components of $G$. We write $\deg_G(v)$ for the degree of a vertex $v \in V(G)$, or just $\deg(v)$ if the context is clear. Finally, for $A \subseteq V(G)$, we write $G|A$ to mean the induced subgraph of $G$ whose vertex set is $A$.

\subsection{The chromatic symmetric function}

For a graph $G$, the \emph{chromatic symmetric function} $\X_G$ is defined by
\[
    \X_G = \sum_{\kappa} \prod_{v \in V(G)} x_{\kappa(v)},
\]
where $\kappa \colon V(G) \to \Zpos$ ranges over proper colorings of $G$ with positive integers. Observe that $\X_G$ is a homogeneous symmetric function of degree $|G|$.

Recall that $\Sym$ has the structure of a Hopf algebra over $\Q$; in particular, it is equipped with a \emph{comultiplication map} $\Delta \colon \Sym \to \Sym \tensor \Sym$. (This can be characterized as the unique linear and multiplicative map that satisfies $\Delta(p_n) = 1 \tensor p_n + p_n \tensor 1$ for $n \in \Zpos.$) Furthermore, for any graph $G$, we have \begin{align} \label{eq:CSF-comult}
    \Delta(\X_G) = \sum_{A \sqcup B = V(G)} \X_{G|A} \tensor \X_{G|B},
\end{align} where $A$ and $B$ form a partition of $V(G)$. This can be proven directly from the definitions or by using the theory of \emph{combinatorial Hopf algebras} \cite{abs}. In particular, this means that if $f$ and $g$ are $\Q$-linear maps $\Sym \to E$ for some $\Q$-algebra $E$, then their \emph{convolution} $f * g$ satisfies \begin{align} \label{eq:CSF-Hopf-map}
    (f*g)(\X_G) = \sum_{A \sqcup B = V(G)} f(\X_{G|A}) \cdot g(\X_{G|B}),
\end{align} and this is also a $\Q$-linear map $\Sym \to E$. Moreover, if $f$ and $g$ are $\Q$-algebra maps, then so is $f * g$, provided that $E$ is commutative.

The definition of the chromatic symmetric function immediately gives a formula for its expansion in the monomial basis of $\Sym$. However, in the case of a forest $F$, the expansion of $\X_F$ in the power sum basis will prove more useful because it has the following combinatorial interpretation.
A \emph{connected partition} of $F$ is an (unordered) set partition of $V(F)$ into blocks, each of which is the vertex set of a connected subgraph of $F$. If $\mathcal{C}$ is a connected partition of $F$, the \emph{type} of $\mathcal{C}$ is the integer partition $\l(\mathcal{C}) \vdash |F|$ whose parts are the sizes of the blocks of $\mathcal{C}$. For an integer partition $\l \vdash |F|$, let $b_\l(F)$ be the number of connected partitions of $F$ of type $\l$. Then we have the following result of Stanley.

\begin{proposition}[{\cite[Corollary 2.8]{stanley}}] \label{prop:CSF-powersum}
    If $F$ is a forest of order $n$, then \[
        \X_F = \sum_{\l \vdash n} b_\l(F) (-1)^{n-\ell(\l)} p_\l.
    \]
\end{proposition}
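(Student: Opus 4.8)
The plan is to prove the power-sum expansion of $\X_F$ for a forest $F$ by combining the fundamental definition of $\X_F$ as a sum over proper colorings with a standard inclusion–exclusion over the edge set, and then interpreting the resulting combinatorial sum in terms of connected partitions. I want to establish the identity
\[
    \X_F = \sum_{\l \vdash n} b_\l(F) (-1)^{n-\ell(\l)} p_\l,
\]
and the natural starting point is to relate the chromatic symmetric function to all colorings (not necessarily proper) via the principle of inclusion–exclusion on monochromatic edges.

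First I would expand $\X_F$ using the deletion–contraction or ``broken circuit''-style inclusion–exclusion that Stanley uses: for each subset $S \subseteq E(F)$ of edges, assign the sign $(-1)^{|S|}$ and count colorings that are monochromatic on every edge of $S$ (with no constraint elsewhere). Concretely, one writes
\[
    \X_F = \sum_{S \subseteq E(F)} (-1)^{|S|} p_{\l(S)},
\]
where $\l(S)$ is the integer partition recording the block sizes of the partition of $V(F)$ into connected components of the spanning subgraph $(V(F), S)$. This is because requiring a coloring to be constant on every edge of $S$ is equivalent to requiring it to be constant on each connected component of $(V(F), S)$, and summing the monomial $\prod_v x_{\kappa(v)}$ over all such colorings yields exactly $\prod_i p_{(\text{component size})_i} = p_{\l(S)}$.

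Next, the key step specific to forests is to reorganize this sum over edge subsets $S$ by the resulting partition of $V(F)$ into components. Here I would exploit that $F$ is a forest: because $F$ has no cycles, each edge subset $S$ is itself a forest, and the connected components of $(V(F), S)$ form a connected partition $\mathcal{C}$ of $F$. Crucially, the correspondence $S \mapsto \mathcal{C}$ is a bijection onto connected partitions, since a connected partition of a forest determines, for each block $B$, a unique induced subtree on $B$, and $S$ must be exactly the union of the edge sets of these induced subtrees. Moreover, if $\mathcal{C}$ has type $\l$, then a block of size $\l_i$ (being a subtree) contains exactly $\l_i - 1$ edges, so $|S| = \sum_i (\l_i - 1) = n - \ell(\l)$. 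This gives the sign $(-1)^{|S|} = (-1)^{n - \ell(\l)}$ uniformly across all $S$ mapping to type $\l$, and grouping terms by $\l$ then immediately yields the claimed formula with coefficient $b_\l(F)$.

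The main obstacle, and the place I would be most careful, is justifying the bijection between edge subsets $S$ and connected partitions together with the uniform edge count $|S| = n - \ell(\l)$; both of these rely essentially on the acyclicity of $F$ (for a general graph the map $S \mapsto \mathcal{C}$ is far from injective, and a single block can be spanned by many different edge subsets). I would therefore state clearly that in a forest the induced subgraph on any connected block is itself a tree with a unique edge set, so that $S$ is recovered from $\mathcal{C}$ and contributes exactly one term. The inclusion–exclusion identity itself is routine once one notes that the generating function for colorings monochromatic on $S$ factors as a product of power sums over components, so I would treat that as a direct computation rather than belaboring it.
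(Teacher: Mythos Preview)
Your argument is correct and is the standard proof of this expansion: inclusion--exclusion over edge subsets gives $\X_F = \sum_{S \subseteq E(F)} (-1)^{|S|} p_{\l(S)}$ for any graph, and for a forest the map $S \mapsto \mathcal{C}$ is a bijection onto connected partitions with $|S| = n - \ell(\l)$, yielding the formula. The paper does not actually prove this proposition; it simply cites Stanley's original paper, so there is no in-paper argument to compare against. That said, the bijection you isolate (edge subsets of a forest $\leftrightarrow$ connected partitions, with $|S| = n - \ell(\l)$) is exactly the observation the paper invokes later in its proof of Lemma~\ref{lemma:phi}, so your approach is fully in line with how the authors think about this result.
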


Thus, for a forest $F$, knowing $\X_F$ is equivalent to knowing all of the numbers $b_\l(F)$.

\subsection{The generalized degree polynomial} \label{subsec:GDP}

We now define the generalized degree polynomial of a forest $F$, following \cite[Section 2.2]{martin2}. Let $A \subseteq V(F)$. An \emph{internal edge} of $A$ is an edge of $F$ with both endpoints in $A$, and a \emph{boundary edge} of $A$ is an edge of $F$ with exactly one endpoint in $A$. (Note that an internal edge of $A$ is the same as an edge of $F|A$.) Let $E_F(A)$ and $D_F(A)$ denote the sets of internal edges and boundary edges of $A$, respectively; we write $e_F(A) = |E_F(A)|$ and $d_F(A) = |D_F(A)|$. (When there is no risk of confusion, we will omit the subscripts.) Observe that \begin{align}
    d_F(A) + 2e_F(A) = \sum_{v \in A} \deg_F(v) \label{eq:deg-identity}
\end{align} by a double-counting argument. Finally, for nonnegative integers $a, b, c$, let \[
    g_F(a,b,c) = \# \{ A \subseteq V(F) : \; |A| = a, \; d(A) = b, \; e(A) = c\}.
\]

\begin{defn}
    The \textbf{generalized degree polynomial} of $F$ is defined by \[
        \G_F(x,y,z) = \sum_{A \subseteq V(F)} x^{|A|} y^{d(A)} z^{e(A)} = \sum_{a,b,c} g_F(a,b,c) x^a y^b z^c.
    \]
\end{defn}

Thus the generalized degree polynomial enumerates subsets $A \subseteq V(F)$ by size and the number of internal and boundary edges. In particular, if $|A| = 1$, then $A$ consists of a single vertex $v$ such that $d(A) = \deg(v)$ and $e(A) = 0$. This implies that $\G_F$ determines the \emph{degree sequence} of $F$: specifically, the number of vertices of $F$ with degree $d$ is just $g_F(1, d, 0)$.

\begin{example}
    Suppose $F$ is a star with $4$ vertices. (Recall that a \emph{star} is a tree consisting of a center vertex connected to some number of outer vertices with no other edges.) Then \[
        \G_F(x,y,z) = x^4z^3 + x^3y^3 + 3x^3yz^2 + 3x^2y^2z + 3x^2y^2 + xy^3 + 3xy + 1.
    \] For example, the coefficient of $x^3yz^2$ in $\G_F(x,y,z)$ is $g_F(3,1,2) = 3$ because there are three subsets of vertices $A \subseteq V(F)$ such that $|A| = 3$, $d(A) = 1$, and $e(A) = 2$. (Specifically, these subsets consist of the center vertex of $F$ and any two of its outer vertices.)
\end{example}

Aliste-Prieto et al.\ \cite{martin2} showed that for a tree $T$, the generalized degree polynomial $\G_T$ is determined linearly by the chromatic symmetric function $\X_T$, that is, they exhibit an explicit $\Q$-linear map $\Sym \to \Q(x,y,z)$ that sends $\X_T$ to $\G_T$ for every tree $T$. We now prove a slight generalization of this statement by using the Hopf algebra structure of $\Sym$.

\begin{proposition} \label{prop:GT-linear}
    There exists a $\Q$-algebra map $\gamma \colon \Sym \to \Q(x,y,z)$ such that for every forest $F$, we have \[
        \gamma(\X_F) = y^{c(F)} \G_F(x,y,z).
    \]
\end{proposition}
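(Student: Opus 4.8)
The plan is to realize $\gamma$ as a convolution $f*g$ of two linear maps $\Sym \to \Q(x,y,z)$, using the coproduct formula~\eqref{eq:CSF-Hopf-map}. The starting observation is an edge count: writing $B = V(F) \setminus A$ for the complement of a subset $A$, the edges of the forest $F$ split into the internal edges of $A$, the internal edges of $B$, and the boundary edges of $A$, so that
\[
    d(A) + e(A) + e(B) = |E(F)| = |F| - c(F).
\]
Combining this with the forest identities $|A| - e(A) = c(F|A)$ and $|B| - e(B) = c(F|B)$, one checks that $c(F|A) + c(F|B) = c(F) + d(A)$, and hence each summand of $y^{c(F)}\G_F$ factors as $\bigl(x^{|A|} y^{c(F|A)} z^{e(A)}\bigr)\cdot y^{c(F|B)}$. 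So if I can produce linear maps $f$ and $g$ with
\[
    f(\X_{F'}) = x^{|F'|} y^{c(F')} z^{e(F')}, \qquad g(\X_{F'}) = y^{c(F')}
\]
for every forest $F'$, then $(f*g)(\X_F) = \sum_{A \sqcup B = V(F)} f(\X_{F|A})\, g(\X_{F|B}) = y^{c(F)} \G_F$, so that $\gamma := f*g$ is the desired (linear) map.

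To construct $f$ and $g$, I would define them as algebra homomorphisms: since $\Sym = \Q[p_1, p_2, \dots]$ is free as a commutative $\Q$-algebra on the power sums, any choice of images of the $p_k$ extends uniquely to a $\Q$-linear algebra map, so this guarantees well-definedness. Multiplicativity is also natural here, since $\X$ is multiplicative over disjoint unions and both target quantities are multiplicative in the components of $F'$. Both maps turn out to be instances of a single component-counting homomorphism: for a variable $w$, let $\phi_w \colon \Sym \to \Q(w)$ be the algebra map with $\phi_w(p_k) = w(w-1)^{k-1}$. Then $g = \phi_y$ handles the second map directly, while taking $f(p_k) = (xz)^k\, \phi_{y/z}(p_k)$ handles the first: because $\X_{F'}$ is homogeneous of degree $|F'|$ and $p_k$ has degree $k$, this weighting contributes exactly the factor $(xz)^{|F'|}$, and $(xz)^{|F'|}(y/z)^{c(F')} = x^{|F'|} y^{c(F')} z^{e(F')}$ using $e(F') = |F'| - c(F')$.

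The heart of the argument is the claim that $\phi_w(\X_{F'}) = w^{c(F')}$ for every forest $F'$, which I would prove from Proposition~\ref{prop:CSF-powersum}. Since $\phi_w(p_\l) = w^{\ell(\l)}(w-1)^{|\l| - \ell(\l)}$, expanding $\X_{F'}$ with $n = |F'|$ gives
\[
    \phi_w(\X_{F'}) = \sum_{\l \vdash n} b_\l(F')\,(1-w)^{n - \ell(\l)}\, w^{\ell(\l)} = (1-w)^n \sum_{\l \vdash n} b_\l(F') \left(\tfrac{w}{1-w}\right)^{\ell(\l)}.
\]
The inner sum is an evaluation of the generating function $S(r) = \sum_{\l} b_\l(F')\, r^{\ell(\l)}$ counting connected partitions by number of blocks, and I would identify $S(r) = r^{c(F')}(r+1)^{n - c(F')}$ by comparing the power-sum expansion against the standard principal specialization $\X_{F'}(1^m) = \chi_{F'}(m) = m^{c(F')}(m-1)^{n - c(F')}$, the chromatic polynomial of a forest factoring over its components. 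Substituting $r = w/(1-w)$ collapses $S$ to $w^{c(F')}/(1-w)^n$, and the prefactor $(1-w)^n$ cancels, leaving $w^{c(F')}$.

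I expect the main obstacle to be exactly this component-counting lemma — establishing $\phi_w(\X_{F'}) = w^{c(F')}$, equivalently the connected-partition identity $S(r) = r^{c}(r+1)^{n-c}$. Everything else is formal: the edge-counting rewrite of $\G_F$, the existence of $f$ and $g$ as algebra homomorphisms, and the passage through the coproduct. Once the lemma is secured, setting $\gamma = f*g$ finishes the proof, with linearity automatic since the convolution of linear maps through $\Delta$ is linear.
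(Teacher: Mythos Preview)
Your proof is correct and follows essentially the same route as the paper. Your map $\phi_w$ is just a reparametrization of the paper's $\varphi_{t,u}$ (namely $\phi_w=\varphi_{w,\,w^{-1}}$, using $c(F')=|F'|-e(F')$), and your choices of $f$ and $g$ coincide with the paper's $\varphi_{xy,\,y^{-1}z}$ and $\varphi_{y,\,y^{-1}}$; the only cosmetic difference is that you verify the key lemma via the chromatic polynomial $\chi_{F'}(m)=m^{c}(m-1)^{n-c}$, whereas the paper appeals directly to the bijection between connected partitions and edge subsets.
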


To prove this statement, we will use the following lemma.

\begin{lemma} \label{lemma:phi}
    There exists a $\Q$-algebra map $\varphi_{t,u} \colon \Sym \to \Q[t,u]$ such that for every forest $F$, we have \begin{align} \label{eq:phi_tu}
        \varphi_{t,u}(\X_F) = t^{|F|} u^{e(F)}.
    \end{align}
\end{lemma}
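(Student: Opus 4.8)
The plan is to produce $\varphi_{t,u}$ as a $\Q$-algebra homomorphism, which is automatically $\Q$-linear. Since $\Sym = \Q[p_1, p_2, \dots]$ is freely generated as a commutative $\Q$-algebra by the power sums, any choice of images $\varphi_{t,u}(p_k) \in \Q[t,u]$ extends uniquely to such a homomorphism, and I will take
\[
    \varphi_{t,u}(p_k) = t^k (1-u)^{k-1} \qquad (k \ge 1).
\]
Because $\X_{F_1 \sqcup F_2} = \X_{F_1}\X_{F_2}$ for disjoint forests, and because both $\varphi_{t,u}$ (being multiplicative) and the target quantity $t^{|F|} u^{e(F)}$ factor over connected components, it suffices to verify \eqref{eq:phi_tu} when $F = T$ is a single tree. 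In that case $e(T) = |T|-1$, so writing $n = |T|$ the goal reduces to $\varphi_{t,u}(\X_T) = t^{n} u^{n-1}$.

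To evaluate $\varphi_{t,u}(\X_T)$, I would first reindex the power-sum expansion of Proposition~\ref{prop:CSF-powersum} by edge subsets. The key combinatorial point is that for a tree the connected partitions $\mathcal{C}$ of $T$ are in bijection with subsets $S \subseteq E(T)$: given $\mathcal{C}$, let $S$ be the set of edges internal to its blocks, and conversely let the blocks be the vertex sets of the connected components of the spanning forest $(V(T), S)$. (That each such component is genuinely an induced subtree uses uniqueness of paths in $T$: if two vertices of a component are adjacent in $T$, the connecting edge must lie in $S$.) A block of size $m$ contributes exactly $m-1$ edges, so $\ell(\mathcal{C}) = n - |S|$ and hence $(-1)^{n-\ell(\mathcal{C})} = (-1)^{|S|}$. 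Proposition~\ref{prop:CSF-powersum} then becomes
\[
    \X_T = \sum_{S \subseteq E(T)} (-1)^{|S|}\, p_{\l(S)},
\]
where $\l(S)$ is the partition recording the sizes of the components of $(V(T), S)$.

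Applying $\varphi_{t,u}$ and using that a component $C$ contributes the factor $t^{|C|}(1-u)^{|C|-1}$ gives
\[
    \varphi_{t,u}(\X_T) = t^n \sum_{S \subseteq E(T)} (-1)^{|S|} \prod_{C} (1-u)^{|C|-1},
\]
the product running over components of $(V(T),S)$. Since the forest $(V(T),S)$ has $n - |S|$ components, the exponents telescope: $\sum_C (|C|-1) = n - (n-|S|) = |S|$, so $\prod_C (1-u)^{|C|-1} = (1-u)^{|S|}$. The sum then collapses by the binomial theorem, using $|E(T)| = n-1$:
\[
    \varphi_{t,u}(\X_T) = t^n \sum_{S \subseteq E(T)} \bigl(-(1-u)\bigr)^{|S|} = t^n \bigl(1 - (1-u)\bigr)^{n-1} = t^n u^{n-1}.
\]

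The substantive content is the reformulation in terms of edge subsets together with the telescoping identity $\prod_C (1-u)^{|C|-1} = (1-u)^{|S|}$; once these are in hand the final summation is immediate. I expect the bijection between connected partitions and edge subsets—and in particular checking that the resulting components are induced subtrees—to need the most care, although it is routine for trees. The specific choice $\varphi_{t,u}(p_k) = t^k(1-u)^{k-1}$ is pinned down by small cases (for instance $p_1 \mapsto t$, and then $\X_{K_2} = p_1^2 - p_2 \mapsto t^2 u$ forces $p_2 \mapsto t^2(1-u)$), and the telescoping explains why this one assignment works uniformly across all trees regardless of their shape.
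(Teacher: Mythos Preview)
Your proof is correct and essentially the same as the paper's: both define $\varphi_{t,u}$ by $p_\l \mapsto t^{|\l|}(1-u)^{|\l|-\ell(\l)}$ (you phrase this multiplicatively on the generators $p_k$, which amounts to the same thing), invoke the bijection between connected partitions and edge subsets, and finish with the binomial theorem. The only cosmetic difference is that you reduce to trees via multiplicativity first, whereas the paper runs the computation directly for an arbitrary forest.
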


\begin{proof}
    We claim that we can define $\varphi_{t,u}$ on the power sums $p_k$ by $\varphi_{t,u}(p_k) = t^k (1-u)^{k-1}$ (for $k \geq 1$) and hence on the power sum basis $\{p_\l\}$ by \[
        \varphi_{t,u}(p_\l) = t^{|\l|} (1-u)^{|\l|-\ell(\l)}. \label{eq:def-phi-powersum}
    \] To show \eqref{eq:phi_tu}, let $F$ be a forest and let $n = |F|$. By Proposition~\ref{prop:CSF-powersum} we have \begin{align}
        \varphi_{t,u}(\X_F)
        &= \sum_{\l \vdash n} b_\l(F) (-1)^{n-\ell(\l)} \cdot t^n (1-u)^{n-\ell(\l)} \nonumber
        \\ &= t^n \sum_{\l \vdash n} b_\l(F) (u-1)^{n-\ell(\l)}. \label{eq:mid-phi-proof}
    \end{align} 
    Note that connected partitions of $F$ are in bijection with subsets of $E(F)$, where $S \subseteq E(F)$ corresponds to the connected partition $\mathcal C$ giving the components of the spanning subgraph of $F$ with edge set $S$. If the type of $\mathcal C$ is $\lambda \vdash n$, then we have $|S| = n-\ell(\lambda)$, so it follows that $\sum_{\ell(\lambda)=i} b_\lambda(F) = \binom{e(F)}{n-i}$. Therefore, letting $i = \ell(\l)$ in \eqref{eq:mid-phi-proof}, we have \[
        \varphi_{t,u}(\X_F) = t^n \sum_i \binom{e(F)}{n - i} (u-1)^{n-i} = t^n u^{e(F)},
    \]
    as claimed.
\end{proof}

We now use Lemma~\ref{lemma:phi} together with \eqref{eq:CSF-Hopf-map} to prove Proposition~\ref{prop:GT-linear}.

\begin{proof}[Proof of Proposition~\ref{prop:GT-linear}]
     Take $\gamma = \varphi_{xy, y^{-1}z} * \varphi_{y, y^{-1}}$. Given a forest $F$, we compute: \[\begin{aligned}
        (\varphi_{xy, y^{-1}z} * \varphi_{y, y^{-1}}) (\X_F)
        &= \sum_{A \sqcup B = V(F)} \varphi_{xy, y^{-1}z}(\X_{F|A}) \cdot \varphi_{y, y^{-1}}(\X_{F|B})
        \\ &= \sum_{A \sqcup B = V(F)} (xy)^{|A|} (y^{-1}z)^{e(A)} \cdot y^{|B|} (y^{-1})^{e(B)}
        \\ &= \sum_{A \sqcup B = V(F)} x^{|A|} \, y^{|F| - e(A) - e(B)} \, z^{e(A)}.
    \end{aligned}\] Each edge of $F$ is either an internal edge of $A$, an internal edge of $B$, or a boundary edge of $A$ (and these are mutually exclusive), so $e(A) + e(B) + d(A) = |E(F)| = |F| - c(F)$. Thus, \[
        (\varphi_{xy, y^{-1}z} * \varphi_{y, y^{-1}}) (\X_F)
        = \sum_{A \subseteq V(F)} x^{|A|} \, y^{c(F)+d(A)} \, z^{e(A)} = y^{c(F)} \G_F(x,y,z)
    \] as desired.
\end{proof}

A couple of remarks are in order regarding the map $\gamma$ constructed above. 
%First, $\gamma$ is actually multiplicative, i.e., it is a map of $\Q$-algebras. This is because the auxiliary map $\varphi_{t,u}$ is multiplicative (as one can see from \eqref{eq:def-phi-powersum}), and so general facts about convolutions imply that $\gamma = \varphi_{xy,y^{-1}z} * \varphi_{y,y^{-1}}$ is also multiplicative. 
First, since $\gamma$ is multiplicative, it is determined by the values of $\gamma(p_n)$ for $n \ge 1$. Since the comultiplication of $\Sym$ obeys $\Delta(p_n) = p_n \otimes 1 + 1 \otimes p_n$, we have \begin{align*}
    \gamma(p_n)
    &= (\varphi_{xy,y^{-1}z} * \varphi_{y,y^{-1}})(p_n) \\
    &= \varphi_{xy,y^{-1}z}(p_n) \cdot \varphi_{y,y^{-1}}(1) + \varphi_{xy,y^{-1}z}(1) \cdot \varphi_{y,y^{-1}}(p_n) \\
    &= (xy)^n (1-y^{-1}z)^{n-1} + y^n (1-y^{-1})^{n-1} \\
    &= x^n y (y-z)^{n-1} + y (y-1)^{n-1},
\end{align*}
which provides an alternative characterization of $\gamma$. Second, it can be shown by a direct (albeit tedious) calculation that $\gamma$ agrees, up to scaling, with the $\Q$-linear map from \cite{martin2} that sends $\X_T$ to $\G_T$ for trees $T$. Specifically, in their notation, one has \[
    \gamma(p_\lambda) = y \cdot \sum_{a, b, c} \omega(\lambda, a, b, c) x^a y^b z^c.
\]
In Section~\ref{sec:higher-order}, we will discuss a further generalization of the generalized degree polynomial that is obtained by a similar construction.

\subsection{Trees with equal generalized degree polynomials}

By contrast with Question~\ref{q:distinguish}, the generalized degree polynomial certainly does not distinguish trees. The smallest pair of non-isomorphic trees with the same generalized degree polynomial is shown in Figure~\ref{fig:same-gdp}. In fact, this example (or in general any such example) gives rise to infinitely many pairs of trees with the same generalized degree polynomial in the following way.

\begin{figure}
    \centering
    \begin{tikzpicture}
        \draw (0,0)--(1,0)--(2,0)--(3,0)--(4,0);
        \draw (0,0)--(0,-1);
        \draw (1,0)--(1,-1);
        \draw (3,0)--(2.8,-1);
        \draw (3,0)--(3.2,-1);
        \draw (4,0)--(3.8,-1);
        \draw (4,0)--(4.2,-1);
        \filldraw [black] (0,0) circle (3pt);
        \filldraw [black] (1,0) circle (3pt);
        \filldraw [black] (2,0) circle (3pt);
        \filldraw [black] (3,0) circle (3pt);
        \filldraw [black] (4,0) circle (3pt);
        \filldraw [black] (0,-1) circle (3pt);
        \filldraw [black] (1,-1) circle (3pt);
        \filldraw [black] (2.8,-1) circle (3pt);
        \filldraw [black] (3.2,-1) circle (3pt);
        \filldraw [black] (3.8,-1) circle (3pt);
        \filldraw [black] (4.2,-1) circle (3pt);
        
        \draw (6,0)--(7,0)--(8,0)--(9,0)--(10,0);
        \draw (6,0)--(6,-1);
        \draw (7,0)--(6.8,-1);
        \draw (7,0)--(7.2,-1);
        \draw (8,0)--(8,-1);
        \draw (10,0)--(9.8,-1);
        \draw (10,0)--(10.2,-1);
        \filldraw [black] (6,0) circle (3pt);
        \filldraw [black] (7,0) circle (3pt);
        \filldraw [black] (8,0) circle (3pt);
        \filldraw [black] (9,0) circle (3pt);
        \filldraw [black] (10,0) circle (3pt);
        \filldraw [black] (6,-1) circle (3pt);
        \filldraw [black] (6.8,-1) circle (3pt);
        \filldraw [black] (7.2,-1) circle (3pt);
        \filldraw [black] (8,-1) circle (3pt);
        \filldraw [black] (9.8,-1) circle (3pt);
        \filldraw [black] (10.2,-1) circle (3pt);
    \end{tikzpicture}
    \caption{The smallest pair of trees with the same generalized degree polynomial.}
    \label{fig:same-gdp}
\end{figure}
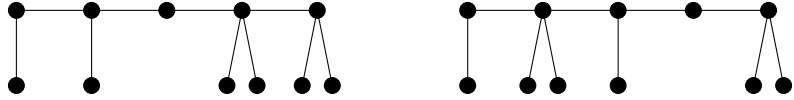

Let $T$ be a tree and let $k$ be a positive integer such that $k \ge \deg(v)$ for every $v \in V(T)$. We define the \emph{$k$-augmentation} of $T$, denoted $T^{[k]}$, to be the tree created by attaching $k - \deg(v)$ leaves to every vertex $v$ of $T$. Thus every non-leaf vertex of $T^{[k]}$ has degree $k$, and the interior of $T^{[k]}$ (the subtree consisting of the non-leaf vertices) is isomorphic to $T$. We therefore view $T$ as embedded in $T^{[k]}$.

\begin{proposition} \label{prop:gdp-augmentation}
    Fix integers $n, k \ge 2$. There exists a $\Q$-linear map $\Q(x,y,z) \to \Q(x,y,z)$ that sends $\G_T$ to $\G_{T^{[k]}}$ for trees $T$ of order $n$.
\end{proposition}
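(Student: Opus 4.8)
The plan is to compute $\G_{T^{[k]}}$ directly in terms of the combinatorial data of $T$ and then read off the required linear map from the resulting formula. First I would parametrize each subset $A \subseteq V(T^{[k]})$ by its intersection $A_0 = A \cap V(T)$ with the interior, together with, for each vertex $v \in V(T)$, the number $\ell_v$ of the $k - \deg_T(v)$ leaves attached to $v$ that lie in $A$; there are $\binom{k - \deg_T(v)}{\ell_v}$ ways to make this choice. A leaf edge at $v$ is internal when its leaf and $v$ both lie in $A$ and boundary when exactly one does, which gives
\[
    |A| = |A_0| + \sum_v \ell_v, \qquad e(A) = e_T(A_0) + \sum_{v \in A_0} \ell_v,
\]
\[
    d(A) = d_T(A_0) + \sum_{v \in A_0}\big((k-\deg_T(v)) - \ell_v\big) + \sum_{v \notin A_0} \ell_v.
\]

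Next I would substitute these into $\G_{T^{[k]}} = \sum_A x^{|A|} y^{d(A)} z^{e(A)}$ and factor the sum over the independent choices of the $\ell_v$. For fixed $A_0$, the binomial theorem collapses the sum over $\ell_v$ into a factor of $(y + xz)^{k - \deg_T(v)}$ when $v \in A_0$ (each leaf contributes $xz$ when included and $y$ when excluded) and a factor of $(1 + xy)^{k - \deg_T(v)}$ when $v \notin A_0$. This yields
\[
    \G_{T^{[k]}} = \sum_{A_0 \subseteq V(T)} x^{|A_0|} y^{d_T(A_0)} z^{e_T(A_0)} (y + xz)^{\sum_{v \in A_0}(k - \deg_T(v))} (1+xy)^{\sum_{v \notin A_0}(k - \deg_T(v))}.
\]

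The final step is to express both exponents through the data $|A_0|, d_T(A_0), e_T(A_0)$ indexing the monomial $x^{|A_0|} y^{d_T(A_0)} z^{e_T(A_0)}$ of $\G_T$. The degree identity \eqref{eq:deg-identity} gives $\sum_{v \in A_0}\deg_T(v) = d_T(A_0) + 2e_T(A_0)$, so writing $a = |A_0|$, $b = d_T(A_0)$, $c = e_T(A_0)$ we obtain $\sum_{v \in A_0}(k - \deg_T(v)) = ka - b - 2c$ and $\sum_{v \notin A_0}(k - \deg_T(v)) = L - ka + b + 2c$, where $L = \sum_{v \in V(T)}(k - \deg_T(v)) = (k-2)n + 2$ is the total number of added leaves. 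I would then define a $\Q$-linear map $\Phi$ on monomials by
\[
    \Phi(x^a y^b z^c) = x^a y^b z^c (y + xz)^{ka - b - 2c} (1 + xy)^{L - ka + b + 2c},
\]
extended $\Q$-linearly to all of $\Q(x,y,z)$; the computation above shows $\Phi(\G_T) = \G_{T^{[k]}}$ for every tree $T$ of order $n$.

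The point requiring care — and the reason the order $n$ must be fixed — is that the exponents in the definition of $\Phi$ depend only on the monomial $(a,b,c)$ and on the constant $L = (k-2)n + 2$, which is the same for every tree of order $n$; thus $\Phi$ is a single map that works for all such $T$ simultaneously. I would also note that $\Phi$ genuinely takes values in $\Q(x,y,z)$ rather than $\Q[x,y,z]$, since for a general monomial the exponents $ka - b - 2c$ and $L - ka + b + 2c$ may be negative, even though on the monomials actually appearing in some $\G_T$ the hypothesis $k \ge \deg_T(v)$ forces both to be nonnegative (they equal $\sum_{v \in A_0}(k-\deg_T(v))$ and $\sum_{v \notin A_0}(k-\deg_T(v))$, respectively). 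A small sanity check on $T = P_2$ with $k = 2$, where $T^{[2]} = P_4$, confirms the formula.
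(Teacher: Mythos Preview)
Your proof is correct and rests on the same combinatorial decomposition as the paper's: split $A\subseteq V(T^{[k]})$ into its interior part $A_0=A\cap V(T)$ and the chosen leaves, then use the degree identity \eqref{eq:deg-identity} so that everything depends only on $(|A_0|,d_T(A_0),e_T(A_0))$ and the constant $L=(k-2)n+2$. The only difference is packaging: the paper fixes a target coefficient $g_{T^{[k]}}(a,b,c)$ and writes it as an explicit binomial sum over the $g_T(i,d,e)$, whereas you run the computation forward and use the binomial theorem to collapse the leaf choices into the closed form $\Phi(x^ay^bz^c)=x^ay^bz^c(y+xz)^{ka-b-2c}(1+xy)^{L-ka+b+2c}$; this is a tidier description of the same linear map, and expanding your formula recovers the paper's coefficient identity.
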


\begin{proof}
    For convenience, let $U = T^{[k]}$. We show that each coefficient of $\G_U$ is a linear combination of coefficients of $\G_T$. Let $a, b, c \ge 0$ and consider \[
        g_U(a,b,c) = \# \{ 
            A \subseteq V(U) : \; |A| = a, \; d_U(A) = b, \; e_U(A) = c
        \}.
    \] By \eqref{eq:deg-identity}, we have $b + 2c = d_U(A) + 2e_U(A) = \sum_{v \in A} \deg_U(v)$. But every vertex of $U$ has degree either $1$ (if it is a leaf of $U$) or $k$ (if it is not a leaf of $U$ and hence a vertex of $T$). Let $\ell_U(A)$ and $i_U(A)$ denote the number of vertices in $A$ of each type, respectively. Then, \[
        \left\{ \begin{aligned}
            \ell_U(A) + i_U(A) &= a, \\
            \ell_U(A) + k \cdot i_U(A) &= b + 2c,
        \end{aligned} \right.
    \] which has a unique solution $(\ell_U(A), i_U(A)) = (\ell, i)$ in terms of $a$, $b$, $c$. It follows that \[
        g_{U}(a,b,c) = \# \{ 
            A \subseteq V(U) : \; \ell_U(A) = \ell, \; i_U(A) = i, \; e_U(A) = c
        \}.
    \] Now let $A_0 = A \cap V(T)$. This set must have size $i$, and it has some number of internal and boundary edges, say $e_T(A_0) = e$ and $d_T(A_0) = d$. (See Figure~\ref{fig:gdp-augmentation-proof} for an example.) We compute the number of choices for $A \setminus A_0$ (consisting of leaves of $U$) such that $\ell_{U}(A) = \ell$ and $e_{U}(A) = c$. First, observe that the number of leaves of $U$ is \[
        \ell^* = \sum_{v \in V(T)} (k - \deg_T(v)) = kn - (2n-2),
    \] which is a constant. Of those leaves, the number that are adjacent to a vertex in $A_0$ is \[
        \sum_{v \in A_0} (k - \deg_T(v)) = ki - (d + 2e),
    \] again using \eqref{eq:deg-identity}. Now $A \setminus A_0$  must consist of $\ell$ of these leaves, and in order to make $e_U(A) = c$, exactly $c - e$ of the leaves must be adjacent to vertices in $A_0$. Therefore we have \[
        g_U(a,b,c)
        = \sum_{d, \, e} g_T(i, d, e) \binom{ki-(d+2e)}{c-e} \binom{\ell^* - (ki-(d+2e))}{\ell - (c-e)}.
    \] Hence $g_U(a,b,c)$ is a linear combination of the coefficients of $\G_T$, as desired.
\end{proof}

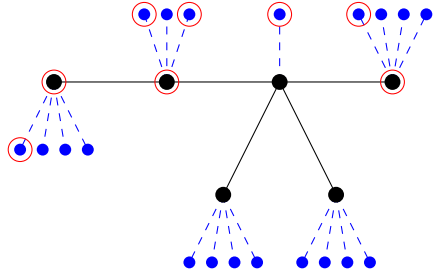
\begin{figure}
    \centering
    \begin{tikzpicture}[scale=1.5]
        \coordinate (A) at (0,0);
        \coordinate (B) at (1,0);
        \coordinate (C) at (-1,0);
        \coordinate (D) at (.5,-1);
        \coordinate (E) at (1.5,-1);
        \coordinate (F) at (2,0);
        \coordinate (A1) at (-.2,.6);
        \coordinate (A2) at (0,.6);
        \coordinate (A3) at (.2,.6);
        \coordinate (B1) at (1,.6);
        \coordinate (C1) at (-1.3,-.6);
        \coordinate (C2) at (-1.1,-.6);
        \coordinate (C3) at (-.9,-.6);
        \coordinate (C4) at (-.7,-.6);
        \coordinate (D1) at (.2,-1.6);
        \coordinate (D2) at (.4,-1.6);
        \coordinate (D3) at (.6,-1.6);
        \coordinate (D4) at (.8,-1.6);
        \coordinate (E1) at (1.2,-1.6);
        \coordinate (E2) at (1.4,-1.6);
        \coordinate (E3) at (1.6,-1.6);
        \coordinate (E4) at (1.8,-1.6);
        \coordinate (F1) at (1.7,.6);
        \coordinate (F2) at (1.9,.6);
        \coordinate (F3) at (2.1,.6);
        \coordinate (F4) at (2.3,.6);
        \draw (C)--(A)--(B)--(F); \draw (D)--(B)--(E);
        \draw[dashed, blue] (A)--(A1);
        \draw[dashed, blue] (A)--(A2);
        \draw[dashed, blue] (A)--(A3);
        \draw[dashed, blue] (B)--(B1);
        \draw[dashed, blue] (C)--(C1);
        \draw[dashed, blue] (C)--(C2);
        \draw[dashed, blue] (C)--(C3);
        \draw[dashed, blue] (C)--(C4);
        \draw[dashed, blue] (D)--(D1);
        \draw[dashed, blue] (D)--(D2);
        \draw[dashed, blue] (D)--(D3);
        \draw[dashed, blue] (D)--(D4);
        \draw[dashed, blue] (E)--(E1);
        \draw[dashed, blue] (E)--(E2);
        \draw[dashed, blue] (E)--(E3);
        \draw[dashed, blue] (E)--(E4);
        \draw[dashed, blue] (F)--(F1);
        \draw[dashed, blue] (F)--(F2);
        \draw[dashed, blue] (F)--(F3);
        \draw[dashed, blue] (F)--(F4);
        \fill [black] (A) circle(2.5pt);
        \fill [black] (B) circle(2.5pt);
        \fill [black] (C) circle(2.5pt);
        \fill [black] (D) circle(2.5pt);
        \fill [black] (E) circle(2.5pt);
        \fill [black] (F) circle(2.5pt);
        \fill [blue] (A1) circle(1.5pt);
        \fill [blue] (A2) circle(1.5pt);
        \fill [blue] (A3) circle(1.5pt);
        \fill [blue] (B1) circle(1.5pt);
        \fill [blue] (C1) circle(1.5pt);
        \fill [blue] (C2) circle(1.5pt);
        \fill [blue] (C3) circle(1.5pt);
        \fill [blue] (C4) circle(1.5pt);
        \fill [blue] (D1) circle(1.5pt);
        \fill [blue] (D2) circle(1.5pt);
        \fill [blue] (D3) circle(1.5pt);
        \fill [blue] (D4) circle(1.5pt);
        \fill [blue] (E1) circle(1.5pt);
        \fill [blue] (E2) circle(1.5pt);
        \fill [blue] (E3) circle(1.5pt);
        \fill [blue] (E4) circle(1.5pt);
        \fill [blue] (F1) circle(1.5pt);
        \fill [blue] (F2) circle(1.5pt);
        \fill [blue] (F3) circle(1.5pt);
        \fill [blue] (F4) circle(1.5pt);
        \draw[red] (A) circle(4pt);
        \draw[red] (C) circle(4pt);
        \draw[red] (F) circle(4pt);
        \draw[red] (A1) circle(3pt);
        \draw[red] (A3) circle(3pt);
        \draw[red] (B1) circle(3pt);
        \draw[red] (C1) circle(3pt);
        \draw[red] (F1) circle(3pt);
    \end{tikzpicture}
    \caption{An example illustrating the proof of Proposition~\ref{prop:gdp-augmentation}. Solid black edges and black vertices are in $T$; dashed blue edges and smaller blue vertices are added to get $U$; circled vertices are in $A$. The parameters have values $(n,k) = (6,5)$, $(a,b,c) = (8,11,4)$, $(\ell, i) = (4,3)$, $(d,e) = (2,1)$, and $\ell^* = 20$.}
    \label{fig:gdp-augmentation-proof}
\end{figure}

Thus if $T_1$ and $T_2$ are the trees in Figure~\ref{fig:same-gdp}, then for all $k \ge 4$, the trees $T_1^{[k]}$ and $T_2^{[k]}$ also have the same generalized degree polynomial. (In fact, we can take repeated augmentations: for example, $(T_1^{[k]})^{[\ell]}$ and $(T_2^{[k]})^{[\ell]}$ have the same generalized degree polynomial for $\ell \ge k \ge 4$.)

\section{Degree embeddings} \label{sec:degree-embeddings}

Our main result concerns several classes of information about a tree $T$ that are determined by $\G_T$. This information can be organized using the idea of \emph{degree embeddings}, which we now define.

\begin{defn}
    Let $D$ be a tree whose vertices $u$ are labeled with positive integers $a_u$. A \emph{degree embedding} of $D$ into $T$ is an isomorphism $\varphi \colon D \to T'$, where $T'$ is a subtree of $T$, such that $a_u = \deg_T (\varphi(u))$ for all $u \in V(D)$. We define the \emph{degree embedding number} of $D$ in $T$, denoted $N_D(T)$, to be the number of subtrees $T' \subseteq T$ for which a degree embedding exists with image $T'$.
\end{defn}

We adopt a simplified notation when $D$ is a path, say with vertex labels $a_1, \dots, a_m$ in that order. In this case, $N_D(T)$ is the number of paths $(v_1, \dots, v_m)$ in $T$ such that $a_i = \deg(v_i)$ for $i = 1, \dots, m$. (Note that if the sequence $a_1, \dots, a_m$ is palindromic, then these paths should only be counted up to reversal.) Then we will write $N_{a_1, \dots, a_m}(T)$ to mean $N_D(T)$. In particular, $N_a(T)$ is the number of vertices of $T$ with degree $a$.

\begin{example} \label{ex:degree-embeddings}
    For the tree $T$ depicted in Figure~\ref{fig:degree-embedding-example}, we have \[
        N_1(T) = 9, \qquad
        N_{2,2}(T) = 1, \qquad
        N_{1,3,1}(T) = 1, \quad \text{and} \quad
        N_D(T) = 4,
    \]
    where $D$ is the star on four vertices with center labeled $4$ and outer vertices labeled $1, 1, 4$.
    
    \begin{figure}
        \centering
        \begin{tikzpicture}[scale=1.25]
            \coordinate[label=below:1] (A) at (0,0);
            \coordinate[label=below:2] (B) at (1,0);
            \coordinate[label=below:2] (C) at (2,0);
            \coordinate[label=below:4] (D) at (3,0);
            \coordinate[label=below:4] (E) at (4,0);
            \coordinate[label=below:3] (F) at (5,0);
            \coordinate[label=below:1] (G) at (6,0);
            \coordinate[label=above:1] (H) at (3.7,1);
            \coordinate[label=above:1] (I) at (4.3,1);
            \coordinate[label=above:1] (J) at (5,1);
            \coordinate[label=above:1] (K) at (3.3,1);
            \coordinate[label=left:4] (L) at (2.7,1);
            \coordinate[label=above:1] (M) at (2.2,2);
            \coordinate[label=above:1] (N) at (2.7,2);
            \coordinate[label=above:1] (O) at (3.2,2);
            \draw (A)--(B)--(C)--(D)--(E)--(F)--(G);
            \draw (I)--(E)--(H); \draw (F)--(J);
            \draw (K)--(D)--(L)--(M); \draw (N)--(L)--(O);
            \fill [black] (A) circle(2pt);
            \fill [black] (B) circle(2pt);
            \fill [black] (C) circle(2pt);
            \fill [black] (D) circle(2pt);
            \fill [black] (E) circle(2pt);
            \fill [black] (F) circle(2pt);
            \fill [black] (G) circle(2pt);
            \fill [black] (H) circle(2pt);
            \fill [black] (I) circle(2pt);
            \fill [black] (J) circle(2pt);
            \fill [black] (K) circle(2pt);
            \fill [black] (L) circle(2pt);
            \fill [black] (M) circle(2pt);
            \fill [black] (N) circle(2pt);
            \fill [black] (O) circle(2pt);
        \end{tikzpicture}
        \caption{The tree $T$ of Example~\ref{ex:degree-embeddings}. Vertices are labeled with their degrees.}
        \label{fig:degree-embedding-example}
    \end{figure}
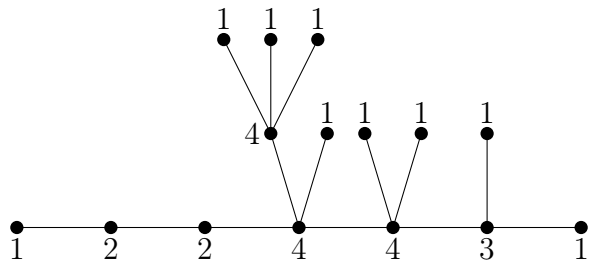
    
\end{example}

Clearly, a tree $T$ can be recovered from all its degree embedding numbers because $T$ (with vertices labeled by degree) is the unique largest tree $D$ with $N_D(T) \neq 0$. However, knowing any ``bounded'' collection of degree embedding numbers is not sufficient in general to recover $T$, as we now show.

\begin{proposition} \label{prop:degree-embedding-same}
    For every $m \ge 1$, there exist non-isomorphic trees $T_1$ and $T_2$ such that $N_D(T_1) = N_D(T_2)$ for all $D$ with $|D| \le m$.
\end{proposition}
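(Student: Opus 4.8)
The plan is to reduce the statement to a purely local condition and then to exhibit an explicit pair of caterpillars meeting it. The key point is that a degree embedding number $N_D(T)$ with $|D|\le m$ sees only the ``local degree structure'' of $T$ around each vertex. To formalize this, for $v\in V(T)$ and $r\ge 0$ I would let $B_r(v)$ be the isomorphism type of the rooted subtree on the vertices of $T$ within distance $r$ of $v$, rooted at $v$, with each vertex $w$ of this subtree labeled by $\deg_T(w)$. Fixing a root $u_0\in V(D)$, one can write $N_D(T)=|\operatorname{Aut}(D)|^{-1}\sum_{v\in V(T)}e(v)$, where $e(v)$ counts degree embeddings of $D$ into $T$ sending $u_0$ to $v$, and $\operatorname{Aut}(D)$ is the automorphism group of $D$ as a degree-labeled tree. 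Since every vertex of $D$ lies within distance $m-1$ of $u_0$ (because $D$ has at most $m$ vertices and hence diameter at most $m-1$), the number $e(v)$ depends only on $B_{m-1}(v)$, so $N_D(T)$ depends only on the multiset $\{B_{m-1}(v):v\in V(T)\}$.

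It therefore suffices to construct non-isomorphic trees $T_1,T_2$ with the same multiset of radius-$(m-1)$ labeled balls, and I would do this with caterpillars. A caterpillar is determined by its spine path $v_1-\dots-v_n$ together with the leaf-count sequence $(p_1,\dots,p_n)$; if every $p_i\ge 1$, then two caterpillars are isomorphic precisely when their leaf-count sequences agree up to reversal. For a caterpillar the ball $B_r(v_i)$ is determined by the window $(p_{i-r},\dots,p_{i+r})$ of leaf-counts, together with the position of the spine endpoints, and the balls around leaves are governed by nearby windows in the same way. Hence if two leaf-count sequences have equal multisets of windows of every length up to $2m-1$, and their special (non-repeating) entries all lie at distance more than $m-1$ from both ends, then the two caterpillars have the same multiset of radius-$(m-1)$ balls.

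For the explicit pair I would ``swap two decorations'' inside a long constant run. Fix distinct positive integers $a,b,c,d$ and an integer $N\ge 2m$, and take the leaf-count sequences
\[
    \mathbf p = a^N\,b\,a^N\,c\,a^N\,d\,a^N,\qquad
    \mathbf q = a^N\,c\,a^N\,b\,a^N\,d\,a^N,
\]
which differ only in the positions of $b$ and $c$. The resulting caterpillars are non-isomorphic because $\mathbf p$ and $\mathbf q$ differ even up to reversal (the trailing block $d\,a^N$ breaks the palindromic symmetry). Since consecutive special symbols are $N+1>2m-1$ apart, every window of length $\le N$ contains at most one special symbol, and each of $b,c,d$ is flanked by $a^N$ on both sides in both sequences; from this one checks directly that $\mathbf p$ and $\mathbf q$ have identical window multisets of every length $\le N$ and identical all-$a$ regions near both endpoints, which is exactly the hypothesis needed.

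I expect the main obstacle to be the bookkeeping in the first step: verifying carefully that $e(v)$ genuinely factors through $B_{m-1}(v)$ — in particular that the degree labels stored in the ball encode everything the embedding condition requires — and that the passage from equal window multisets to equal ball multisets correctly accounts for leaves as well as the degree drop at the two spine endpoints. The construction in the third step is deliberately rigid so that these endpoint effects are confined to the all-$a$ regions and hence cancel, but the implication ``equal windows $\Rightarrow$ equal ball multisets'' is the part that warrants a careful, if routine, argument.
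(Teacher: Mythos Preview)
Your plan is correct, and the ball-multiset reduction is a clean way to think about the problem. The paper, however, takes a more direct and shorter route. It also builds caterpillars, but smaller ones: the two trees have spine degree sequences
\[
    4,\underbrace{2,\dots,2}_{m},3,\underbrace{2,\dots,2}_{m+1},5
    \qquad\text{and}\qquad
    4,\underbrace{2,\dots,2}_{m+1},3,\underbrace{2,\dots,2}_{m},5,
\]
and instead of passing through labeled balls, it gives a direct bijection between subtrees of order at most $m$ that preserves all $T$-degrees. The point is that any subtree of order $\le m$ containing the degree-$4$ vertex cannot reach the degree-$3$ vertex, so it sits entirely in a region that looks the same in both trees; likewise for the degree-$5$ end; and after deleting those two special vertices the remaining forests are isomorphic by reflecting the spine, which handles everything else. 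This is essentially the minimal instance of your ``swap a decoration inside a long constant run'' idea, with a single marker moved by one position and the asymmetric endpoints ($4$ versus $5$) forcing non-isomorphism.

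What your approach buys is a reusable lemma: once you know that $N_D(T)$ for $|D|\le m$ depends only on the multiset $\{B_{m-1}(v):v\in V(T)\}$, you can manufacture many such pairs, not just one. What the paper's approach buys is brevity---there is no bookkeeping about leaves or endpoint effects, and the trees have order $2m+10$ rather than roughly $8m$. The step you flag as the main obstacle (that $e(v)$ factors through $B_{m-1}(v)$) is in fact immediate; the genuinely fiddly part of your argument is the last implication, from equal window multisets to equal ball multisets, where you must separately match the balls around leaves (which see windows of length $2m-3$, weighted by the central $p_i$) and around the $O(m)$ spine vertices near each end. Your choice of $N\ge 2m$ does make all of this go through.
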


\begin{proof}
    Let $T_1$ be the tree created by attaching leaves to a path with $2m+4$ vertices so that the path vertices have degrees \[4, \underbrace{2, \dots, 2}_m, 3, \underbrace{2, \dots, 2}_{m+1}, 5\] in that order. Let $T_2$ be the tree constructed analogously but using degrees \[4, \underbrace{2, \dots, 2}_{m+1}, 3, \underbrace{2, \dots, 2}_m, 5.\] These trees are not isomorphic; however, we can give a bijection between subtrees $S_1 \subseteq T_1$ and $S_2 \subseteq T_2$ of order $|S_1| = |S_2| \le m$ that preserves vertex degrees (with respect to $T_1$ and $T_2$). Suppose $S_1$ is a subtree of $T_1$ with $|S_1| \le m$. If $S_1$ contains the degree-$4$ vertex of $T_1$, then it contains at most $m-1$ degree-$2$ vertices on the left side of $T_1$. It follows that there exists a corresponding subtree $S_2$ on the left side of $T_2$ with the same vertex degrees. A similar argument holds for the case in which $S_1$ contains the degree-$5$ vertex of $T_1$.
    
    Now the remaining subtrees of $T_1$ and $T_2$ are those not containing the degree-$4$ or degree-$5$ vertices. But when the degree-$4$ and degree-$5$ vertices of $T_1$ and $T_2$ are deleted, the resulting graphs are isomorphic (under an isomorphism that preserves the original vertex degrees), so a bijection between subtrees exists as desired. Thus, $N_D(T_1) = N_D(T_2)$ whenever $|D| \le m$.
\end{proof}

As a preview of the next section, we now present a list of degree embedding numbers that we will be able to recover from $\G_T$. (Note, however, that many degree embedding numbers cannot be determined by $\G_T$. For example, in Figure~\ref{fig:same-gdp}, let $T_1$ be the tree on the left and let $T_2$ be the tree on the right. Then $\G_{T_1} = \G_{T_2}$, but one can check that $N_{1,2,3}(T_1) = 1 \neq 0 = N_{1,2,3}(T_2)$ and $N_{1,3,2}(T_1) = 2\neq 3 = N_{1,3,2}(T_2)$.)

\begin{theorem} \label{thm:degree-embedding-list}
    The generalized degree polynomial $\G_T$ determines the degree embedding numbers $N_D(T)$ when $D$ is any of the following:
    \begin{enumerate}[label=(\roman*)]
        \item \label{thm-part:Nab} A path of order $2$.
        \item \label{thm-part:Naaa} A path of order $3$ with all vertex labels equal.
        \item \label{thm-part:Naab} A path of order $3$ with vertex labels $a, a, b$ in that order, where $b \ge 2a$.
        \item \label{thm-part:N2...2} A path in which all vertices are labeled $2$.
        \item \label{thm-part:leaf-adjacency} A star in which the outer vertices are all labeled $1$.
    \end{enumerate}
\end{theorem}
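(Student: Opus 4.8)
The plan is to work throughout with the coefficients $g_T(a,b,c)$, which are exactly the data carried by $\G_T$, and to extract each family of degree embedding numbers by assembling these coefficients into a small number of auxiliary generating functions. The basic relation $d(A)+2e(A)=\sum_{v\in A}\deg_T v$ from \eqref{eq:deg-identity} shows that a connected subset $A$ (a subtree, characterized by $e(A)=|A|-1$) is recorded by $\G_T$ only through its order $|A|$ and its total degree $\sum_{v\in A}\deg_T v$; equivalently, $\G_T$ determines the two-variable generating function $\mathcal S(u,t)=\sum_S u^{|S|}t^{\sum_{v\in S}\deg_T v}$ summed over all subtrees $S$ of $T$. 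The whole difficulty is that this records only symmetric, shape-blind data about each subtree, whereas a degree embedding number $N_D(T)$ asks for a count of subtrees of a prescribed shape with degrees assigned to specific vertices. The five listed cases are precisely those in which this lost information can be reconstructed, and the rational functions $\Gk(x,y)$ are the device that packages the reconstruction.

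First I would treat the double-degree sequence, part~\ref{thm-part:Nab}. From the order-$2$ part of $\mathcal S$ one reads off only the anti-diagonal sums $\sum_{a+b=s}N_{a,b}(T)$, so additional relations are needed to separate $N_{a,b}$ within a fixed degree sum. These I would obtain from the order-$3$ data: a path $u-v-w$ contributes total degree $\deg u+\deg v+\deg w$, and by computing the low moments of the $P_3$ degree-sum distribution $Q(s)=g_T(3,s-4,2)$ in terms of the degree sequence, one solves for the symmetric quantity $\sum_{\{u,v\}\in E}\deg u\,\deg v$. Iterating this idea with longer paths and assembling the resulting relations into the $\Gk$ produces the further equations needed to pin down every $N_{a,b}(T)$; I would build the $\Gk$ by exploiting the factorization of subset statistics over the branches at a distinguished vertex, which is the combinatorial content behind the convolution used in Proposition~\ref{prop:GT-linear} and which lets a chosen center and its incident configuration be tracked apart from the rest. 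The main point to verify is that the resulting linear system is nondegenerate.

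The same engine handles the three path cases~\ref{thm-part:Naaa}, \ref{thm-part:Naab}, and~\ref{thm-part:N2...2}, and here I expect the numerical hypotheses to be exactly the solvability conditions. For an order-$3$ path with labels $a,a,b$, the subtree carries total degree $2a+b$ with two internal edges, and the branch factorization in $\Gk$ should separate the central label from the two endpoint labels; the hypothesis $b\ge 2a$ is what forces the large label $b$ to sit at an endpoint rather than at the center, so that the competing contributions of other label multisets with the same degree sum cannot collide and the extraction is unambiguous. The all-equal case $a,a,a$ is forced by symmetry with no such hypothesis. For part~\ref{thm-part:N2...2} the target is the multiset of lengths of maximal runs of degree-$2$ vertices: a bare path of length $m$ is exactly a connected subset all of whose vertices have degree $2$, detected by the extremal value $d(A)=2$, and summing over all lengths $m$ is precisely what makes $\Gk$ rational rather than polynomial (a geometric series in the length variable). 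The obstacle here is separating genuine degree-$2$ paths from impostor subtrees sharing the same triple $(|A|,d(A),e(A))$, which I would resolve using the full run-length generating function rather than any single coefficient.

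Finally, for the leaf adjacency sequence, part~\ref{thm-part:leaf-adjacency}, the key simplification is that a leaf has degree $1$ and so contributes nothing to the boundary of a subset beyond the single edge joining it to its neighbor. Thus a star formed by a center $v$ and $\ell$ of its adjacent leaves has the rigid statistics $(|A|,d(A),e(A))=(\ell+1,\ \deg_T v-\ell,\ \ell)$, and I would isolate such configurations by an inclusion--exclusion over the number of chosen arms, using this rigidity to discard arms that are not leaves; inverting the resulting binomial relation then recovers, for each $d$ and $\ell$, the number of degree-$d$ vertices adjacent to exactly $\ell$ leaves. Across all five parts the single genuine obstacle is the disaggregation of the shape-blind degree-sum statistics recorded by $\mathcal S$; constructing the $\Gk$ and verifying that the associated linear systems are invertible—an invertibility enabled in cases~\ref{thm-part:Naab}--\ref{thm-part:leaf-adjacency} by exactly the stated numerical and structural hypotheses—is where the real work lies.
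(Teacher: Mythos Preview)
Your plan identifies the right obstacle---that $\G_T$ records a subtree only through $|A|$ and the total degree $\sum_{v\in A}\deg_T v$---but the mechanisms you propose for disaggregating this data are too vague to succeed, and in several places point in the wrong direction. The paper's $\Gk$ are not built by ``branch factorization at a distinguished vertex''; they are obtained by differentiating $\G_T$ in $z$, specializing $z=y^2$, and dividing by $\G_T(x,y,y^2)$, which produces
\[
\Gk(x,y)=\sum_{|S|=k}\ \prod_{v\in\overline S}\frac{xy^{\deg v}}{1+xy^{\deg v}}.
\]
The crucial move you are missing is then to \emph{create poles}: specializing $x=-y^{-d}$ in $(1+xy^d)^{2k}\Gk(x,y)$ kills every term except those in which $\overline S$ consists of $2k$ vertices all of degree $d$, i.e., $S$ is a matching in $T_{(d)}$. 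This matching-polynomial extraction is the engine behind parts~\ref{thm-part:Naaa} and~\ref{thm-part:N2...2}; your ``run-length generating function'' idea does not resolve the impostor problem you yourself flag, since a connected $A$ with $d(A)=2$ need not consist of degree-$2$ vertices. For part~\ref{thm-part:Naab}, the hypothesis $b\ge 2a$ is not about which vertex the large label sits on (that is already fixed by the definition of $N_{a,a,b}$); rather, after the pole specialization one is left with a rational function in $y$ whose expansion in the basis $\{1,\frac{y^k}{1-y^k}\}$ separates $N_{a,a,a+k}$ from $N_{a,a,a-k}$ only for $k\ge a$, which is exactly $b\ge 2a$.

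For part~\ref{thm-part:leaf-adjacency} your star statistics $(\ell+1,\deg_T v-\ell,\ell)$ are correct, but an inclusion--exclusion on these triples alone cannot separate a center of degree $d$ with $\ell$ leaf neighbors from other $(\ell+1)$-vertex subtrees with the same total degree. The paper instead specializes $x=-y^{-1}$ in $(1+xy)^k\Gk(x^{-1},y^{-1})$ to isolate edge sets consisting of $k$ leaf edges, and then evaluates at a primitive $(a{-}1)$th root of unity to pick out non-leaf endpoints with $\deg\equiv 1\pmod{a-1}$; this yields the elementary symmetric functions of the multiset $\{\ell(v)/(\deg v-1)\}$ over such vertices, from which the leaf adjacency numbers are recovered by downward induction on $d$. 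None of this analytic machinery---pole creation, matching polynomials of $T_{(d)}$, root-of-unity filtering---appears in your outline, and without it the linear systems you allude to are not shown to be invertible.
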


For completeness, we note that $\G_T$ determines $N_D(T)$ if $D$ is a single vertex, since we already know that $\G_T$ determines the degree sequence of $T$. Additionally, $\G_T$ determines $N_D(T)$ if $D$ is a path with vertex labels $1, 2, 2, \dots, 2$ in that order; this is a consequence of a result of Crew \cite[Theorem 3]{crewnote} about the \emph{twigs} of a tree, as mentioned in the introduction.

\section{Degree embedding numbers from the generalized degree polynomial} \label{sec:degree-embedding-numbers}

To prove Theorem~\ref{thm:degree-embedding-list}, we first construct a sequence of two-variable rational functions $(\Gk)_{k \ge 1}$ which, taken together, are equivalent to $\G_T$. Then, by working with these rational functions, we recover all the desired degree embedding numbers $N_D(T)$.

\subsection{The rational functions $\Gk$} \label{subsec:Gk}

To define $\Gk$, we will need the following notion. For a subset of edges $S \subseteq E(T)$, let $\verts{S}$ be the subset of $V(T)$ consisting of all vertices incident to at least one edge in $S$. (That is, considering each edge as a set of size 2, we have $\verts{S} = \bigcup_{e \in S} e$.)

\begin{defn}
    Let $\Ga0(x,y) = 1$, and for positive integers $k$, define \[
        \Gk(x,y) = \sum_{\substack{S \subseteq E(T) \\ |S| = k}} \prod_{v \in \verts{S}} \frac{xy^{\deg(v)}}{1+xy^{\deg(v)}}.
    \]
\end{defn}

\begin{example}
    Let $T$ be the path on $6$ vertices. For brevity, let $q = \frac{xy}{1+xy}$ and $r = \frac{xy^2}{1+xy^2}$. Then $\Ga1(x,y) = 2qr + 3r^2$ because two edges of $T$ connect a leaf and a vertex of degree $2$, and the other three edges connect two vertices of degree $2$. We also have \[
        \Ga2(x,y) = q^2r^2 + 4qr^3 + r^4 + 2qr^2 + 2r^3.
    \] For example, the $2r^3$ term arises because there are two sets $S \subseteq E(T)$ with $|S| = 2$ such that $\verts{S}$ consists of three vertices of degree $2$.
\end{example}

We now show how $\Gk$ can be computed from $\G_T$. The key is to take partial derivatives of $\G_T(x,y,z)$ with respect to $z$.

\begin{proposition}
    For each $k$, $\Gk(x,y)$ is determined by $\G_T(x,y,z)$. Specifically, we have
    \begin{align} \label{eq:Gk}
        \Gk(x,y) = \frac{y^{2k}}{k!} \cdot \frac{\d^k \G_T}{\d z^k}(x,y,y^2) \cdot \G_T(x, y, y^2)^{-1}.
    \end{align}
\end{proposition}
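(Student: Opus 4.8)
The plan is to clear the denominator in \eqref{eq:Gk} and prove the equivalent polynomial-level identity that, after multiplying through by $\G_T(x,y,y^2)$, both sides agree. The first step is to substitute $z = y^2$ and exploit the degree identity \eqref{eq:deg-identity}. Since $d(A) + 2e(A) = \sum_{v \in A} \deg(v)$, substituting $z = y^2$ into the definition of $\G_T$ collapses the exponents of $y$ into a single product over $A$:
\[
    \G_T(x, y, y^2) = \sum_{A \subseteq V(T)} x^{|A|} y^{d(A) + 2e(A)} = \sum_{A \subseteq V(T)} \prod_{v \in A} x y^{\deg(v)} = \prod_{v \in V(T)} (1 + x y^{\deg(v)}),
\]
the last equality holding because each vertex is independently included in $A$ or not. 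This identifies the normalizing factor as a product over vertices, and in particular each factor $1 + xy^{\deg(v)}$ matches the denominator appearing in the summand of $\Gk$.

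Next I would differentiate. Writing $w_v = xy^{\deg(v)}$ for brevity, I differentiate $\G_T$ term by term in $z$ using $\frac{\d^k}{\d z^k} z^{e(A)} = k! \binom{e(A)}{k} z^{e(A)-k}$, substitute $z = y^2$, and again apply \eqref{eq:deg-identity}. After multiplying by $\frac{y^{2k}}{k!}$, the powers of $y$ recombine exactly as in the first step, yielding
\[
    \frac{y^{2k}}{k!} \cdot \frac{\d^k \G_T}{\d z^k}(x, y, y^2) = \sum_{A \subseteq V(T)} \binom{e(A)}{k} \prod_{v \in A} w_v.
\]
It then remains to show that the right-hand side equals $\Gk(x,y) \cdot \prod_{v \in V(T)} (1 + w_v)$.

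The combinatorial heart of the argument is the identity $\binom{e(A)}{k} = \#\{S \subseteq E(T) : |S| = k, \verts{S} \subseteq A\}$: since the internal edges of $A$ are precisely the edges of $T$ with both endpoints in $A$, choosing $k$ of them is the same as choosing a $k$-subset $S$ of $E(T)$ with $\verts{S} \subseteq A$. Substituting this count and swapping the order of summation lets me sum first over $k$-subsets $S$ and then over sets $A \supseteq \verts{S}$. For fixed $S$, writing $A = \verts{S} \sqcup B$ with $B \subseteq V(T) \setminus \verts{S}$ factors the inner sum as $\bigl(\prod_{v \in \verts{S}} w_v\bigr) \prod_{v \notin \verts{S}} (1 + w_v)$, which I rewrite as $\bigl(\prod_{v \in \verts{S}} \frac{w_v}{1+w_v}\bigr) \prod_{v \in V(T)} (1 + w_v)$ by supplying the missing factors $1 + w_v$ over $\verts{S}$. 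Factoring out the common product $\prod_{v \in V(T)}(1+w_v) = \G_T(x,y,y^2)$ leaves exactly $\Gk(x,y)$, which rearranges into \eqref{eq:Gk}.

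I expect the only real subtlety to be bookkeeping: correctly tracking the exponents of $y$ through the differentiation and the $z = y^2$ substitution so that they recombine into the clean product $\prod_{v \in A} w_v$. This is not a genuine obstacle—the whole argument is a short generating-function manipulation once the product formula $\G_T(x,y,y^2) = \prod_{v \in V(T)}(1 + xy^{\deg(v)})$ is in hand, with the interpretation of $\binom{e(A)}{k}$ as a count of edge subsets providing the bridge to the definition of $\Gk$.
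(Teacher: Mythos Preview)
Your proposal is correct and follows essentially the same approach as the paper: differentiate, substitute $z=y^2$, use \eqref{eq:deg-identity} to collapse the $y$-exponents into $\prod_{v\in A} xy^{\deg(v)}$, interpret $\binom{e(A)}{k}$ as the number of $k$-edge subsets $S$ with $\verts{S}\subseteq A$, swap the order of summation, and factor. The only cosmetic difference is that you establish the product formula $\G_T(x,y,y^2)=\prod_v(1+xy^{\deg(v)})$ up front, whereas the paper reads it off as the $k=0$ case at the end.
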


\begin{proof}
    Since $\G_T(x,y,z) = \sum_{A \subseteq V(T)} x^{|A|} y^{d(A)} z^{e(A)}$, we have
    \[
        \frac{z^k}{k!} \cdot \frac{\d^k \G_T}{\d z^k}(x,y,z)
        = \sum_{A \subseteq V(T)} \binom{e(A)}{k} x^{|A|} y^{d(A)} z^{e(A)}
        = \sum_{A \subseteq V(T)} \sum_{\substack{S \subseteq E(A) \\ |S| = k}} x^{|A|} y^{d(A)} z^{e(A)}.
    \] Next, we set $z = y^2$, which yields \[\begin{aligned}
        \frac{y^{2k}}{k!} \cdot \frac{\d^k \G_T}{\d z^k}(x,y,y^2)
        &= \sum_{A \subseteq V(T)} \sum_{\substack{S \subseteq E(A) \\ |S| = k}} x^{|A|} y^{d(A)+2e(A)}
        = \sum_{A \subseteq V(T)} \sum_{\substack{S \subseteq E(A) \\ |S| = k}} \prod_{v \in A} xy^{\deg(v)}
    \end{aligned}\] by the identity \eqref{eq:deg-identity}. We now reverse the order of summation. Observe that for $A \subseteq V(T)$ and $S \subseteq E(T)$, we have $S \subseteq E(A)$ if and only if $\verts{S} \subseteq A$. Therefore we can write
    \[\begin{aligned}
        \frac{y^{2k}}{k!} \cdot \frac{\d^k \G_T}{\d z^k}(x,y,y^2)
        &=
        \sum_{\substack{S \subseteq E(T) \\ |S| = k}} \sum_{\verts{S} \subseteq A \subseteq V(T)} \prod_{v \in A} xy^{\deg(v)}
        \\ &=
        \sum_{\substack{S \subseteq E(T) \\ |S| = k}} \bigg(\, \prod_{v \in \verts{S}} xy^{\deg(v)} \prod_{v \in V(T) \setminus \verts{S}} \left(1 + xy^{\deg(v)}\right) \bigg)
        \\ &=
        \prod_{v \in V(T)} (1 + xy^{\deg(v)}) \;\cdot\; \sum_{\substack{S \subseteq E(T) \\ |S| = k}} \prod_{v \in \overline{S}} \frac{xy^{\deg(v)}}{1+xy^{\deg(v)}},
    \end{aligned}\]
    which shows that
    \[
        \frac{y^{2k}}{k!} \cdot \frac{\d^k \G_T}{\d z^k}(x,y,y^2)
        =
        \prod_{v \in V(T)} (1 + xy^{\deg(v)}) \;\cdot\; \Gk(x,y).
    \]
    By taking $k = 0$, we see that $\G_T(x,y,y^2) = \prod_{v \in V(T)} (1+xy^{\deg(v)})$, so \eqref{eq:Gk} follows.
\end{proof}

Note that, conversely, $\G_T$ can be recovered from all of the $\Gk$. Indeed, the Taylor series expansion of $\G_T(x,y,z)$ about $z = y^2$ is
\[\begin{aligned}
    \G_T(x,y,z)
    &= \sum_{k \ge 0} \frac{1}{k!} \cdot \frac{\d^k \G_T}{\d z^k}(x,y,y^2) \cdot (z-y^2)^k
    \\ &= \G_T(x,y,y^2) \sum_{k \ge 0} \frac{1}{y^{2k}} \cdot \Gk(x,y) \cdot (z-y^2)^k,
\end{aligned}\] so the $\Gk$ determine the product $\G_T(x,y,z) \cdot \G_T(x,y,y^2)^{-1}$. We will see as a consequence of the results of Section~\ref{subsec:double-degrees} that the $\Gk$ determine the degree sequence of $T$, so they determine $\G_T(x,y,y^2)$ and thus $\G_T(x,y,z)$.

Before proceeding, we comment on the relationship between $\G_T$ and the $\Gk$. Whereas $\X_T$ determines $\G_T$ linearly, it is not true that $\G_T$ determines $\Gk$ linearly. In other words, treating $\Gk$ as a formal power series in $x$ and $y$, its coefficients are generally not linear functions of the coefficients $g_T(a,b,c)$ of $\G_T$ (as one can check computationally); rather, the formula \eqref{eq:Gk} implies that each coefficient of $\Gk$ can be written as a polynomial in the $g_T(a,b,c)$. We also remark that some of the methods we will use to extract degree embedding numbers from the $\Gk$ are highly nonlinear (particularly those in Sections~\ref{subsec:matchings} and \ref{subsec:leaf-adjacency}).

\subsection{Double-degree numbers} \label{subsec:double-degrees}

We now prove the first part of Theorem~\ref{thm:degree-embedding-list} by showing that $\Ga1$, and thus $\G_T$, determines all the degree embedding numbers $N_{a,b}(T)$. (We call these \emph{double-degree numbers}.) Observe that by a double-counting argument, \[
    N_a(T) = \frac{1}{a}\left(\sum_{b \neq a} N_{a,b}(T) + 2N_{a,a}(T)\right).
\] Thus the double-degree numbers determine the degree sequence of $T$, as promised above.

\begin{theorem} \label{thm:double-degrees}
    The rational function $\Ga1$ determines all double-degree numbers $N_{a,b}(T)$.
\end{theorem}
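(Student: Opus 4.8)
The plan is to recover the double-degree numbers $N_{a,b}(T)$ by directly reading them off from the structure of $\Ga1(x,y)$. By definition,
\[
    \Ga1(x,y) = \sum_{e \in E(T)} \prod_{v \in \verts{\{e\}}} \frac{xy^{\deg(v)}}{1+xy^{\deg(v)}},
\]
and each edge $e = uv$ contributes exactly the product $\frac{xy^{\deg(u)}}{1+xy^{\deg(u)}} \cdot \frac{xy^{\deg(v)}}{1+xy^{\deg(v)}}$. Thus $\Ga1$ is a sum over edges, where an edge with endpoint degrees $a$ and $b$ contributes a term depending only on the unordered pair $\{a,b\}$. Introducing the shorthand $q_a = \frac{xy^a}{1+xy^a}$ as in the worked examples, I would write
\[
    \Ga1(x,y) = \sum_{a \le b} N_{a,b}(T) \, q_a q_b,
\]
so that proving the theorem amounts to showing that the coefficients $N_{a,b}(T)$ can be uniquely extracted from this expression.

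First I would establish that the functions $\{q_a q_b : a \le b\}$ are linearly independent over $\Q$ inside the appropriate ring (for instance, the field of rational functions $\Q(x,y)$, or the ring of power series after expanding each $q_a$ as a geometric series $q_a = \sum_{j \ge 1} (-1)^{j-1} x^j y^{aj}$). Linear independence is the crux of the argument: once it is in hand, the expression of $\Ga1$ as a $\Q$-linear combination of the $q_a q_b$ is unique, so each $N_{a,b}(T)$ is forced. I expect this linear independence to be the main obstacle, since the products $q_a q_b$ are not obviously independent as rational functions and one must argue carefully that no nontrivial cancellation among distinct pairs $\{a,b\}$ is possible.

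The cleanest route I foresee for the independence step is to expand each $q_a$ as a power series in $x$ and track the leading (lowest-degree-in-$x$) behavior. Since $q_a = xy^a - x^2 y^{2a} + \cdots$, the product $q_a q_b$ has leading term $x^2 y^{a+b}$ (for $a \le b$), and the full coefficient of $x^2$ in $\Ga1$ is $\sum_{a \le b} N_{a,b}(T)\, y^{a+b}$. This single coefficient does not separate pairs with the same sum $a+b$, so I would instead examine the bivariate structure more finely. A robust approach is to extract the coefficient of a monomial $x^i y^j$ for small $i$: writing $q_a q_b = \sum_{m \ge 2} x^m \bigl(\sum_{s+t = m,\, s,t \ge 1} (-1)^{m} y^{as+bt}\bigr)$ and examining the exponents of $y$ that appear, one can set up a triangular system that recovers each $N_{a,b}(T)$ in turn, ordering the pairs $\{a,b\}$ lexicographically by $(a+b, \min(a,b))$ or by the smallest $y$-exponent they contribute at a fixed $x$-degree.

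Concretely, I would argue by induction on pairs ordered so that the $y$-exponent $a+b$ of the $x^2$-term is the primary key. At $x$-degree $2$, the coefficient of $y^n$ in $\Ga1$ is $\sum_{a+b = n,\, a \le b} N_{a,b}(T)$, which determines the row-sums grouped by $a+b$. To separate pairs with a common sum, I would pass to the coefficient of $x^3$, whose contribution from $q_a q_b$ is $-(y^{2a+b} + y^{a+2b})$; the exponents $2a+b$ and $a+2b$ together with $a+b$ pin down $a$ and $b$ individually, giving enough equations to solve for each $N_{a,b}(T)$ uniquely. Since every coefficient of $\Ga1$ is determined by $\G_T$ via the formula \eqref{eq:Gk}, this shows that $\G_T$ determines all the $N_{a,b}(T)$, completing the proof of part \ref{thm-part:Nab} of Theorem~\ref{thm:degree-embedding-list}.
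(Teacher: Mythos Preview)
Your overall strategy---writing $\Ga1=\sum_{a\le b}N_{a,b}(T)\,q_aq_b$ with $q_a=\frac{xy^a}{1+xy^a}$, expanding each $q_a$ as a geometric series, and then extracting the $N_{a,b}$ by a triangularity/induction argument---is exactly the paper's. The gap is in your concrete execution.

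Your plan to separate the $N_{a,b}$ using only the coefficients of $x^2$ and $x^3$ cannot work. A dimension count already kills it: for pairs with $1\le a\le b\le\Delta$ there are $\binom{\Delta+1}{2}\sim\Delta^2/2$ unknowns, while the coefficients of $x^2y^j$ (for $2\le j\le2\Delta$) and of $x^3y^j$ (for $3\le j\le3\Delta$) together supply only $O(\Delta)$ linear equations. So the degree-$\le 3$ truncations of the $q_aq_b$ are linearly \emph{dependent} once $\Delta$ is moderately large, and no ordering with primary key $a+b$ followed by a pass to the $x^3$ level will disentangle them. Concretely, the $x^3y^m$ coefficient mixes pairs from many different values of $a+b$: for instance $m=9$ receives contributions from $(1,4)$, $(2,5)$, $(1,7)$, and $(3,3)$. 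Your sentence ``the exponents $2a+b$ and $a+2b$ together with $a+b$ pin down $a$ and $b$'' is true of a single pair in isolation but does not produce a solvable linear system.

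The paper repairs this by going to much higher $x$-degree and using a different order. It inducts on $(a,b)$ with $a\le b$ in \emph{lexicographic} order (primary key $a$, not $a+b$) and, to isolate $N_{a_0,b_0}$, examines the coefficient of the single monomial $x^{b_0}y^{a_0b_0-a_0+b_0}$, which arises from the term $(k,\ell)=(b_0-1,1)$ in the expansion. A short inequality then shows that any other $(a,b)$ contributing to this monomial satisfies $a<a_0$, or $a=a_0$ and $b\le b_0$; hence the coefficient equals (a nonzero multiple of) $N_{a_0,b_0}$ plus a combination of lexicographically earlier terms, and the induction closes. The key point is that the $x$-degree used grows with $b_0$, which is exactly what the dimension count says must happen.
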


\begin{proof}
    By symmetry, we may consider only the double-degree numbers $N_{a,b}(T)$ where $a \le b$. We proceed by induction on the pairs $(a,b)$ with $a \le b$ in lexicographical order. The base case $(a,b) = (1,1)$ is trivial because $N_{1,1}(T) = 1$ if $|T| = 2$ and $N_{1,1}(T) = 0$ otherwise. For the induction step, fix positive integers $a_0 \le b_0$ with $b_0 \ge 2$; assume that $\Ga1$ determines all $N_{a,b}(T)$ (with $a \le b$) where $(a,b)$ precedes $(a_0,b_0)$ lexicographically. We expand $\Ga1$ as a formal power series as follows:
    \begin{align}
        \Ga1(x,y)
        &= \sum_{\{v,w\} \in E(T)} \frac{xy^{\deg(v)}}{1+xy^{\deg(v)}} \frac{xy^{\deg(w)}}{1+xy^{\deg(w)}} \nonumber
        \\ &= \sum_{a \le b} \frac{xy^a}{1+xy^a} \frac{xy^b}{1+xy^b} N_{a,b}(T) \nonumber
        \\ &= \sum_{a \le b} \sum_{k, \ell \ge 1} (-1)^{k+\ell} (xy^a)^k (xy^b)^\ell N_{a,b}(T). \label{eq:Ga1}
    \end{align}
    Consider the coefficient $C$ of the monomial $x^{b_0} y^{a_0b_0-a_0+b_0}$ in this power series. From \eqref{eq:Ga1}, we see that $C$ is a linear combination of terms $N_{a,b}(T),$ where $a \le b$ satisfy \[
        k+\ell = b_0
        \qquad
        \text{and}
        \qquad
        ka+\ell b = a_0b_0-a_0+b_0
    \] for some $k, \ell \ge 1$. (The sign $(-1)^{k+\ell} = (-1)^{b_0}$ is constant for all such terms, so there is no cancellation.) These equations are satisfied when $(a, b, k, \ell) = (a_0, b_0, b_0-1, 1)$, so $N_{a_0,b_0}(T)$ appears as a term in the expression for $C$. We claim that for every other term $N_{a,b}(T)$ that appears, $(a,b)$ precedes $(a_0,b_0)$ lexicographically. Indeed, we have \[
        ab_0 = (k+\ell)a \le ka + \ell b = a_0b_0 - a_0 + b_0 < (a_0+1)b_0,
    \] so $a < a_0 + 1$ and thus $a \le a_0$. Furthermore, if $a = a_0$, then we have \[
        \ell b = (a_0b_0 - a_0 + b_0) - ka = (a_0b_0 - a_0 + b_0) - (b_0 - \ell)a_0 = (\ell-1)a_0 + b_0 \le \ell b_0,
    \] so $b \le b_0$. This proves the claim. It then follows from the induction hypothesis that $\Ga1$ determines $N_{a_0,b_0}(T)$.
\end{proof}

This establishes Theorem~\ref{thm:degree-embedding-list}\ref{thm-part:Nab}.

\begin{example}
    As an illustration of the above proof, we derive a formula for $N_{2,3}(T)$ in terms of coefficients of $\Ga1(x,y)$. For $(a_0, b_0) = (2,3)$, we have $x^{b_0} y^{a_0b_0 - a_0 + b_0} = x^3 y^7$, and so using \eqref{eq:Ga1} we compute \[
        [x^3 y^7] \Ga1 = N_{1,3}(T) + N_{1,5}(T) + N_{2,3}(T).
    \] It then remains to compute $N_{1,3}(T)$ and $N_{1,5}(T)$. For $(a_0,b_0) = (1,5)$, we get \[
        [x^5 y^9] \Ga1 = N_{1,2}(T) + N_{1,3}(T) + N_{1,5}(T),
    \] so (by coincidence) the $N_{1,3}(T)$ terms will cancel and we need only compute $N_{1,2}(T)$. For $(a_0,b_0) = (1,2)$, we have just \[
        [x^2 y^3] \Ga1 = N_{1,2}(T).
    \] Therefore, we get the formula \[
        N_{2,3}(T) = [x^3 y^7] \Ga1 - [x^5 y^9] \Ga1 + [x^2 y^3] \Ga1.
    \]
\end{example}

Using this procedure, any double-degree number $N_{a,b}(T)$ can be expressed as a linear combination of coefficients of $\Ga1$ and hence as a polynomial in the coefficients of $\G_T$.

\subsection{Matchings and paths} \label{subsec:matchings}

We now prove parts \ref{thm-part:Naaa} and \ref{thm-part:N2...2} of Theorem~\ref{thm:degree-embedding-list}. To do so, recall that for a graph $G$, the \emph{matching-generating polynomial} of $G$ is the univariate polynomial \[
    M_G(t) = \sum_{S} t^{|S|},
\] where $S$ ranges over all matchings (collections of disjoint edges) of $G$. For $d \ge 2$, denote by $T_{(d)}$ the induced subgraph of $T$ consisting of all degree-$d$ vertices of $T$.

\begin{theorem} \label{thm:matching-polynomial}
    The polynomial $\G_T$ determines $M_{T_{(d)}}(t)$ for all $d$.
\end{theorem}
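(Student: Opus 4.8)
The plan is to isolate, inside each rational function $\Gk(x,y)$ (which we have already shown is determined by $\G_T$ via \eqref{eq:Gk}), the contribution coming precisely from size-$k$ matchings of $T_{(d)}$. Fix a degree $d$, and write $q_e = \frac{xy^e}{1+xy^e}$ for the factor attached to a vertex of degree $e$, so that $\Gk(x,y) = \sum_{|S|=k} \prod_{v \in \verts{S}} q_{\deg(v)}$. The key manipulation is the substitution $x = s\,y^{-d}$ followed by letting $y \to 0$. Under this substitution $q_d$ becomes $\frac{s}{1+s}$, independent of $y$; for a vertex of degree $e < d$ one computes $q_e = \frac{s}{y^{d-e}+s} \to 1$, while for a vertex of degree $e > d$ one has $q_e = \frac{sy^{e-d}}{1+sy^{e-d}} \to 0$. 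Each summand of $\Gk$ therefore has a well-defined limit, so the whole expression does.

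Carrying out this limit, the term indexed by $S$ survives only when $\verts{S}$ contains no vertex of degree exceeding $d$, in which case it contributes $\left(\frac{s}{1+s}\right)^{n_d(S)}$, where $n_d(S)$ denotes the number of degree-$d$ vertices in $\verts{S}$. Writing $u = \frac{s}{1+s}$ (an invertible change of variable, so that the limit is recovered unambiguously as a polynomial in $u$), I would obtain
\[
    P_k(u) = \sum_{\substack{S \subseteq E(T),\ |S| = k \\ \deg(v) \le d \text{ for all } v \in \verts{S}}} u^{n_d(S)},
\]
a polynomial of degree at most $2k$. Since $\G_T$ determines $\Gk$, and since the substitution and limit are explicit operations, $P_k$ is determined by $\G_T$.

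Next I would read off the matching numbers from the top coefficient of $P_k$. For any $k$-edge set $S$ one has $n_d(S) \le |\verts{S}| \le 2k$, and $n_d(S) = 2k$ forces both $|\verts{S}| = 2k$ (so that no two edges of $S$ share a vertex, i.e.\ $S$ is a matching) and that every incident vertex has degree exactly $d$ (so that $S$ is a matching of $T_{(d)}$); conversely every size-$k$ matching of $T_{(d)}$ attains $n_d(S) = 2k$. Hence $[u^{2k}]P_k$ equals the number $m_k$ of size-$k$ matchings of $T_{(d)}$, and therefore
\[
    M_{T_{(d)}}(t) = \sum_{k \ge 0} m_k\, t^k = \sum_{k \ge 0} \big([u^{2k}]P_k\big)\, t^k
\]
is determined by $\G_T$, as desired.

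The genuinely delicate point, and the step I expect to require the most care, is this last one: one must verify that the top-degree coefficient of $P_k$ is free of contamination. Lower-degree terms of $P_k$ mix contributions from non-matchings and from subsets that touch vertices of degree below $d$, but none of these can reach the exponent $2k$, so extracting the leading coefficient cleanly recovers the matching count. What makes this separation possible is precisely the substitution $x = s\,y^{-d}$, which simultaneously annihilates the higher-degree vertices and trivializes the lower-degree ones while leaving the degree-$d$ factors intact; I would present that substitution, and the verification that each factor has the claimed limit, as the technical heart of the argument.
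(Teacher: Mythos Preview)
Your argument is correct and rests on the same idea as the paper's: start from $\Gk$ and single out the size-$k$ matchings of $T_{(d)}$ as exactly those $S$ for which $\verts{S}$ has the maximal size $2k$ with every vertex of degree $d$. The paper's specialization is a bit more direct than your limit-then-top-coefficient maneuver: it multiplies $\Gk(x,y)$ by $(1+xy^d)^{2k}$ and evaluates at $x=-y^{-d}$, which in one stroke annihilates every term having fewer than $2k$ factors of $(1+xy^d)$ in the denominator and leaves the matching count as a constant, bypassing the need to separately handle vertices of degree below $d$ or to extract a leading coefficient.
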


\begin{proof}
    Let $k$ be an arbitrary positive integer. Consider the product \[
        (1+xy^d)^{2k} \cdot \Gk(x,y) = (1+xy^d)^{2k} \sum_{\substack{S \subseteq E(T) \\ |S| = k}} \prod_{v \in \verts{S}} \frac{xy^{\deg(v)}}{1+xy^{\deg(v)}}.
    \] For any set of edges $S \subseteq E$ with $|S| = k$, we have $\left|\verts{S}\right| \le 2k$, with equality if and only $S$ is a matching. Therefore, if we set $x = -y^{-d}$ (so $1+xy^d = 0$), the only terms that survive are those having $2k$ factors of $1+xy^d$ in the denominator. These terms occur exactly when $S$ is a matching of size $k$ of $T_{(d)}$. Each such term evaluates to $(-1)^{2k}=1$, so we recover the number of such matchings for each $k$. Since $k$ was arbitrary and $\G_T$ determines all $\Gk$, it follows that $\G_T$ determines $M_{T_{(d)}}(t)$.
\end{proof}

Now, observe that the number of matchings of size $2$ in $T_{(d)}$ is given by $\binom{N_{d,d}(T)}{2} - N_{d,d,d}(T)$. Because $\G_T$ determines $N_{d,d}(T)$ from Section~\ref{subsec:double-degrees}, it follows that $\G_T$ determines $N_{d,d,d}(T)$ as well. This proves Theorem~\ref{thm:degree-embedding-list}\ref{thm-part:Naaa}.

To establish Theorem~\ref{thm:degree-embedding-list}\ref{thm-part:N2...2}, we will show that $\G_T$ determines $T_{(2)}$ (up to isomorphism). Note that $T_{(2)}$ is a disjoint union of paths since all of its vertices have degree at most $2$. Furthermore, $\G_T$ determines the order of $T_{(2)}$ (because this is simply $N_2(T)$) as well as the matching-generating polynomial $M_{T_{(2)}}(t)$. This information suffices to determine $T_{(2)}$ because of the following elementary result.

\begin{proposition}
    If $G$ is a disjoint union of paths, then $G$ is determined by $|G|$ and $M_G(t)$.
\end{proposition}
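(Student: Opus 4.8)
The plan is to translate the statement into one about the multiset of orders of the path components of $G$. Write $G = \bigsqcup_{j \ge 1} c_j P_j$, where $P_j$ is the path on $j$ vertices and $c_j \ge 0$ records how many components of $G$ are isomorphic to $P_j$; since a disjoint union of paths is determined up to isomorphism by the multiset $(c_j)_{j \ge 1}$, it suffices to recover every $c_j$ from $|G|$ and $M_G(t)$. Because every matching of $G$ is a disjoint union of matchings of its components, $M_G(t) = \prod_{j \ge 1} M_{P_j}(t)^{c_j}$. The matching-generating polynomials of paths satisfy $M_{P_0} = M_{P_1} = 1$ and the three-term recurrence $M_{P_j} = M_{P_{j-1}} + t\,M_{P_{j-2}}$ (delete a leaf edge); in particular $M_{P_1} = 1$, so isolated vertices are invisible to $M_G$. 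Consequently $M_G(t) = \prod_{j \ge 2} M_{P_j}(t)^{c_j}$, and the role of $|G|$ is precisely to recover the number of isolated vertices at the very end, via $c_1 = |G| - \sum_{j \ge 2} j c_j$, once the $c_j$ for $j \ge 2$ are known.

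So the heart of the matter is to recover $(c_j)_{j \ge 2}$ from the single polynomial $M_G(t) = \prod_{j \ge 2} M_{P_j}(t)^{c_j}$. For this I would use the classical closed form coming from the recurrence: writing $M_{P_j}(t) = \frac{\alpha^{j+1} - \beta^{j+1}}{\alpha - \beta}$ with $\alpha + \beta = 1$ and $\alpha\beta = -t$, one finds that $M_{P_j}$ has the $\lfloor j/2 \rfloor$ distinct real roots $t = -\bigl(4\cos^2\tfrac{k\pi}{j+1}\bigr)^{-1}$ for $1 \le k \le \lfloor j/2 \rfloor$. In particular the root of $M_{P_j}$ closest to the origin is $-r_j$, where $r_j = \bigl(4\cos^2\tfrac{\pi}{j+1}\bigr)^{-1}$, and $r_j$ is \emph{strictly decreasing} in $j$ because $\cos\tfrac{\pi}{j+1}$ increases toward $1$.

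With these facts the recovery is a clean peeling induction on $\deg M_G$. If $M_G = 1$ then $c_j = 0$ for all $j \ge 2$. Otherwise, the root of $M_G$ nearest the origin has magnitude $\min_{j : c_j > 0} r_j = r_m$, where $m$ is the \emph{largest} index with $c_m > 0$ (using that $r_\bullet$ is decreasing); since $r_\bullet$ is injective this determines $m$. Moreover $-r_m$ is not a root of any $M_{P_j}$ with $2 \le j < m$ (every root of such an $M_{P_j}$ has magnitude at least $r_j > r_m$) and is a simple root of $M_{P_m}$, so its multiplicity in $M_G$ equals $c_m$. Dividing $M_G$ by $M_{P_m}^{c_m}$ yields $\prod_{2 \le j < m} M_{P_j}^{c_j}$, a polynomial of strictly smaller degree to which the same procedure applies; iterating recovers all $c_j$ with $j \ge 2$, and then $c_1$ follows from $|G|$ as above.

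The step I expect to require the most care is the monotonicity-and-separation input: that the extreme (closest to the origin) root of a path matching polynomial both strictly decreases in magnitude as the path grows and is avoided by all shorter paths. Distinct $M_{P_j}$ do share roots in general (for instance $M_{P_5} = M_{P_2}\cdot(1 + 3t)$), so one genuinely needs the explicit root locations, not merely degrees or leading coefficients, to guarantee that the nearest-origin root isolates the longest component cleanly; the closed form above is exactly what makes this rigorous.
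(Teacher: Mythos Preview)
Your proposal is correct and follows essentially the same approach as the paper's proof: both factor $M_G$ over path matching polynomials, compute the roots explicitly as $t = -\bigl(4\cos^2\tfrac{k\pi}{j+1}\bigr)^{-1}$ (the paper via an explicit change of variables to Chebyshev $U_j$, you via the Binet-type formula for the same recurrence), observe that the root of smallest magnitude is strictly monotone in $j$ and simple, and then peel off the longest component by reading its multiplicity at that root, recovering $c_1$ from $|G|$ at the end. Your remark that shorter paths can share roots with longer ones (e.g.\ $M_{P_5} = M_{P_2}(1+3t)$) and that one therefore genuinely needs the extremal-root monotonicity is apt and matches exactly what the paper uses.
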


\begin{proof}
    For $j \ge 0$, let $f_j(t)$ be the matching-generating polynomial of a path on $j$ vertices. Because the matching-generating polynomial multiplies over disjoint unions, it follows that \begin{align} \label{eq:MGt}
        M_G(t) = f_2(t)^{c_2} \dotsm f_n(t)^{c_n}
    \end{align} where $n = |G|$ and $c_j \ge 0$ is the number of components of $G$ of order $j$. (Note that $f_1(t) = 1$, so there is no $f_1(t)$ term.) We will show that all products of this form are distinct. The combinatorial definition of $f_j(t)$ yields the recurrence relation $f_j(t) = f_{j-1}(t) + t f_{j-2}(t)$ for $j \ge 2$, with $f_0(t) = f_1(t) = 1$. One can then show by induction that \[
        U_j(x) = (2x)^j f_j\left(-\tfrac{1}{4x^2}\right),
    \] where $U_j(x)$ denotes the $j$th Chebyshev polynomial of the second kind. The roots of $U_j(x)$ are the numbers $x = \cos(\frac{k\pi}{j+1})$ for $k = 1, 2, \dots, j$, so it follows that $f_j(t)$ has distinct roots \[
        t = -\frac{1}{4 \cos^2(\frac{k\pi}{j+1})}
    \] for $1 \le k < \frac{j+1}{2}$, a total of $\lfloor \frac{j}{2} \rfloor$ roots. Since $f_j$ has degree $\lfloor \frac{j}{2} \rfloor$, these are all of its roots. In particular, the largest root of $f_j(t)$ is $t_j = -1/(4 \cos^2 \frac{\pi}{j+1})$, which increases with $j$, so $t_j$ is not a root of any of $f_2(t), \dots, f_{j-1}(t)$. It then follows that all products of the form \eqref{eq:MGt} are distinct: if \[
        f_2(t)^{c_2} \dotsm f_n(t)^{c_n} = f_2(t)^{d_2} \dotsm f_n(t)^{d_n}
    \] for some $c_j, d_j \ge 0$, then considering the order of the root $t_n$ on both sides forces $c_n = d_n$, and one can continue by induction to show that $c_j = d_j$ for all $j$.

    Therefore, $M_G(t)$ determines the number of components of $G$ of order $j$ for all $j \ge 2$. Then, $|G|$ determines the number of components of order $1$, so $G$ is fully determined.
\end{proof}

It now follows that $\G_T$ determines $N_{\underbrace{\scriptstyle 2, \dots, 2}_k}(T)$ for all $k$, because this is the number of paths of order $k$ in $T_{(2)}$. Thus, we have established Theorem~\ref{thm:degree-embedding-list}\ref{thm-part:N2...2}.

\subsection{Triple-degree numbers} \label{subsec:triple-degrees}

We now consider degree embedding numbers of the form $N_{a,b,c}(T)$, which we call \emph{triple-degree numbers}. We will recover certain linear combinations of triple-degree numbers from $\G_T$, including $N_{a,a,b}(T)$ for $b \ge 2a$, which proves Theorem~\ref{thm:degree-embedding-list}\ref{thm-part:Naab}.

\begin{theorem}
    Let $a \ge 2$ be an integer. Then $\G_T$ determines all of the following:
    \begin{enumerate}[label=(\roman*)]
        \item $N_{a,a,1}(T) + N_{a,a,2}(T) + \cdots + N_{a,a,a-1}(T)$,
        \item $N_{a,a,a-k}(T) - N_{a,a,a+k}(T)$ for all $1 \le k \le a-1$, and
        \item $N_{a,a,b}(T)$ for all $b \ge 2a$.
    \end{enumerate}
\end{theorem}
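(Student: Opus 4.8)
The plan is to read the triple-degree numbers off the rational function $\Ga2$, which encodes the length-two paths of $T$. Writing $q_d = \frac{xy^d}{1+xy^d}$, a set $S \subseteq E(T)$ with $|S| = 2$ is either a matching (four vertices) or a path $v_1 - v_2 - v_3$ with shared vertex $v_2$ (three vertices), and the latter contributes $q_{\deg v_1} q_{\deg v_2} q_{\deg v_3}$ to $\Ga2$. The immediate obstruction is that this monomial depends only on the \emph{multiset} of the three degrees, so $\Ga2$ cannot tell the middle vertex of a path from its endpoints; in particular it conflates $N_{a,a,b}(T)$ with $N_{a,b,a}(T)$. Breaking this symmetry is the heart of the argument.

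To do so, I would compare $\Ga2$ with $\Ga1^2$. Writing $\Ga1 = \sum_e q(e)$ with $q(\{v,w\}) = q_{\deg v} q_{\deg w}$, squaring and sorting the ordered pairs $(e,f)$ into $e = f$, matchings, and paths gives $\Ga1^2 = \sum_e q(e)^2 + 2\sum_{\text{matchings}}(\cdots) + 2\sum_{\text{paths}} q_{d_1} q_{d_2}^2 q_{d_3}$, where $d_2$ is the degree of the shared vertex. The matching contributions cancel upon subtraction, and using $q_{d_2}(1 - q_{d_2}) = \frac{xy^{d_2}}{(1+xy^{d_2})^2}$ one finds
\[
    P := \Ga2 - \tfrac12 \Ga1^2 + \tfrac12 \sum_{e \in E(T)} q(e)^2 = \sum_{v_1-v_2-v_3} \frac{xy^{d_1}}{1+xy^{d_1}} \cdot \frac{xy^{d_2}}{(1+xy^{d_2})^2} \cdot \frac{xy^{d_3}}{1+xy^{d_3}},
\]
in which the middle vertex now carries a squared denominator. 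Since $\sum_e q(e)^2 = \sum_{a \le b} N_{a,b}(T)\, q_a^2 q_b^2$ is controlled by the double-degree numbers (Theorem~\ref{thm:double-degrees}) and $\Ga1, \Ga2$ are determined by $\G_T$, the rational function $P$ is determined by $\G_T$. Grouping paths by middle degree $m$ and endpoint degrees gives $P = \sum_m \frac{xy^m}{(1+xy^m)^2} \sum_{p \le q} N_{p,m,q}(T)\, q_p q_q$.

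Next I would isolate middle degree $a$ by expanding $P$ as a Laurent series in $t = 1 + xy^a$ about $t = 0$ (equivalently about $x = -y^{-a}$). A path term has a pole there of order $2\cdot[\![m=a]\!]$ plus the number of its endpoints of degree $a$, so poles of order at least $3$ arise only from $N_{a,a,a}(T)$ (order $4$) and from the numbers $N_{a,a,b}(T)$ with $b \ne a$ (order $3$). Reading off the coefficient of $t^{-3}$ and subtracting the contribution of $N_{a,a,a}(T)$ — which is already determined by $\G_T$ through the matching results of Section~\ref{subsec:matchings} — yields a rational function
\[
    \Phi_a(y) = \sum_{b \ne a} N_{a,a,b}(T)\, \frac{-y^{b-a}}{1-y^{b-a}}
\]
determined by $\G_T$. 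Expanding each summand as a power series in $y$ (those with $b > a$ begin at $y^{b-a}$, while those with $b < a$ equal $1 + y^{a-b} + y^{2(a-b)} + \cdots$) gives $[y^0]\Phi_a = \sum_{b < a} N_{a,a,b}(T)$ and, for every $k \ge 1$,
\[
    [y^k]\Phi_a = \sum_{d \mid k} \big( N_{a,a,a-d}(T) - N_{a,a,a+d}(T) \big),
\]
with the convention $N_{a,a,a-d}(T) = 0$ when $a - d \le 0$.

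The constant term is exactly part (i). For $k \ge 1$, Möbius inversion recovers $N_{a,a,a-k}(T) - N_{a,a,a+k}(T) = \sum_{d \mid k} \mu(k/d)\, [y^d]\Phi_a$ from data determined by $\G_T$: for $1 \le k \le a-1$ this is part (ii), and for $k \ge a$ the term $N_{a,a,a-k}(T)$ vanishes, so we obtain $N_{a,a,a+k}(T)$ for all $b = a+k \ge 2a$, giving part (iii). The main obstacle is the very first step: $\Ga2$ is symmetric in the three vertices of a path, and the whole argument hinges on the identity for $P$, which manufactures the asymmetry needed to single out the middle degree. Once $P$ is in hand, the remaining work — the Laurent extraction at $x = -y^{-a}$ and the Möbius inversion — is routine.
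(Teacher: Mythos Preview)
Your argument is correct, and the second half (once $\Phi_a$ is in hand) is essentially the paper's own endgame: the paper rewrites $\sum_{b\ne a}\frac{y^{b-a}}{1-y^{b-a}}N_{a,a,b}$ in terms of the linearly independent functions $1$ and $\frac{y^k}{1-y^k}$, which is just a repackaging of your power-series/M\"obius computation.

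The first half, however, is a genuinely different route. The paper never builds your $P$. Instead it stays inside $\Ga2$, strips out the terms with $S\subseteq E(T_{(a)})$ using the matching data of Section~\ref{subsec:matchings}, and then evaluates $(1+xy^a)^3\til\Gamma$ at $x=-y^{-a}$. The surviving terms come from \emph{matchings} $S=\{e_1,e_2\}$ in which $e_1$ joins two degree-$a$ vertices and $e_2$ joins a degree-$a$ vertex to a degree-$b$ vertex; counting these as $N_{a,a}N_{a,b}-N_{a,a,b}$ is what breaks the endpoint/middle symmetry. Your mechanism for breaking that symmetry is algebraic rather than combinatorial: the identity $q_d(1-q_d)=xy^d/(1+xy^d)^2$ forces the shared vertex of a length-two path to carry a squared denominator in $P$, so a single Laurent coefficient at $t=1+xy^a$ isolates exactly the $N_{a,a,b}$ contributions. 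Your approach has the advantage of working directly with paths (avoiding the detour through matchings and the subtraction of $N_{a,a}N_{a,b}$), at the cost of needing the double-degree numbers to compute $\sum_e q(e)^2$; the paper's approach leans instead on the full matching polynomial of $T_{(a)}$. Both routes use $N_{a,a,a}$ from Section~\ref{subsec:matchings} at the last step.
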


\begin{proof}
    Throughout this proof, we write $N_D$ instead of $N_D(T)$ for brevity. We know that $\G_T$ determines the rational function \[
        \Ga2(x, y) = \sum_{\substack{S \subseteq E(T) \\ |S| = 2}} \prod_{v \in \verts{S}} \frac{xy^{\deg(v)}}{1+xy^{\deg(v)}}.
    \]
    For some of the terms in this sum, we have $S \subseteq E(T_{(a)})$, that is, all the vertices in $\verts{S}$ have degree $a$. Each such term equals either $(\frac{xy^a}{1+xy^a})^4$ or $(\frac{xy^a}{1+xy^a})^3$ depending on whether $S$ is a matching or not, respectively. By Theorem~\ref{thm:matching-polynomial}, $\G_T$ determines the number of terms of each type. Therefore, it follows that $\G_T$ determines the sum \[
        \til{\Gamma}(x,y) = \sum_{\substack{S \subseteq E(T) \\ S \nsubseteq E(T_{(a)}) \\ |S| = 2}} \prod_{v \in \verts{S}} \frac{xy^{\deg(v)}}{1+xy^{\deg(v)}}
    \] obtained by deleting all terms in $\Ga2(x,y)$ where $S \subseteq E(T_{(a)})$.
    
    In each term of $\til{\Gamma}(x,y)$, the number of factors of $1+xy^a$ in the denominator is at most $3$. Furthermore, equality holds exactly when the two edges of $S$ are disjoint, the endpoints of one edge both have degree $a$, and the endpoints of the other edge have degrees $a$ and $b$ for some $b \neq a$. Then the corresponding term of $\til{\Gamma}(x,y)$ is $(\frac{xy^a}{1+xy^a})^3 \frac{xy^b}{1+xy^b},$ and the number of such terms for a given $b$ is $N_{a,a} N_{a,b} - N_{a,a,b}$. Therefore if we take $x = -y^{-a}$ in the product $(1+xy^a)^3 \cdot \til{\Gamma}(x,y)$, then the result is \[\begin{aligned}
        & \sum_{b \neq a} (xy^a)^3 \frac{xy^b}{1+xy^b} \cdot \left( N_{a,a} N_{a,b} - N_{a,a,b} \right) \bigg|_{x = -y^{-a}}
        \\ &= \sum_{b \neq a} \frac{y^{b-a}}{1-y^{b-a}} \cdot \left( N_{a,a} N_{a,b} - N_{a,a,b} \right).
    \end{aligned}\] Because $\G_T$ determines all $N_{a,a}$ and $N_{a,b}$ by Theorem~\ref{thm:double-degrees}, it follows that $\G_T$ also determines \[
        \sum_{b \neq a} \frac{y^{b-a}}{1-y^{b-a}} N_{a,a,b}.
    \] Now we write \[\begin{aligned}
        \sum_{b \neq a} \frac{y^{b-a}}{1-y^{b-a}} N_{a,a,b}
        &= -\sum_{b=1}^{a-1} \frac{1}{1-y^{a-b}} N_{a,a,b} + \sum_{b=a+1}^\infty \frac{y^{b-a}}{1-y^{b-a}} N_{a,a,b}
        \\&= -\sum_{k=1}^{a-1} \frac{1}{1-y^k} N_{a,a,a-k} + \sum_{k=1}^\infty \frac{y^k}{1-y^k} N_{a,a,a+k}
        \\&= -\sum_{k=1}^{a-1} \left(1 + \frac{y^k}{1-y^k}\right) N_{a,a,a-k} + \sum_{k=1}^\infty \frac{y^k}{1-y^k} N_{a,a,a+k}
        \\&= -\sum_{k=1}^{a-1} N_{a,a,a-k}  - \sum_{k=1}^{a-1} \frac{y^k}{1-y^k} (N_{a,a,a-k} - N_{a,a,a+k}) + \sum_{k=a}^\infty \frac{y^k}{1-y^k} N_{a,a,a+k}.
    \end{aligned}\] The rational functions $1$ and $\frac{y^k}{1-y^k}$ for $k \ge 1$ are linearly independent over $\Q$ because they have roots of different orders at $y=0$. Thus, $\G_T$ determines the coefficients of these rational functions in the above expression, which are precisely the desired linear combinations.
\end{proof}

\subsection{Leaf adjacency} \label{subsec:leaf-adjacency}

Finally, we prove Theorem~\ref{thm:degree-embedding-list}\ref{thm-part:leaf-adjacency}. Assume that $|T| \ge 3$, so that no two leaves of $T$ are adjacent. For a vertex $v \in V(T)$, let $\ell(v)$ be the number of leaves of $T$ adjacent to $v$, and let $L_{d,\ell}(T)$ be the number of vertices $v$ with $\deg(v) = d$ and $\ell(v) = \ell$. The numbers $L_{d,\ell}(T)$ form the \emph{leaf adjacency sequence} of $T$.

\begin{theorem}
    Fix integers $d \ge 2$ and $\ell \ge 0$. Then $\G_T$ determines $L_{d,\ell}(T)$. 
\end{theorem}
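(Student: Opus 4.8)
The plan is to reduce the computation of $L_{d,\ell}(T)$ to the extraction of certain \emph{star embedding numbers} from the rational functions $\Ga{\ell}$. For fixed $d \geq 2$ and $\ell \geq 0$, let $D_\ell$ denote the star with center labeled $d$ and $\ell$ outer vertices labeled $1$. Since no two leaves of $T$ are adjacent (as $|T| \geq 3$), a degree embedding of $D_\ell$ into $T$ amounts to a choice of a degree-$d$ vertex $v$ together with $\ell$ of its leaf-neighbors, so
\[
    N_{D_\ell}(T) = \sum_{v \,:\, \deg(v) = d} \binom{\ell(v)}{\ell} = \sum_{\ell' \geq \ell} \binom{\ell'}{\ell} L_{d,\ell'}(T).
\]
This binomial transform is invertible, so it suffices to show that $\G_T$ determines every $N_{D_\ell}(T)$; in fact I would recover these numbers for all centers $d' \geq 2$ simultaneously. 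Writing $P_j = \frac{xy^j}{1 + xy^j}$, the same no-adjacent-leaves observation shows that the only edge sets $S$ with $|S| = \ell$ whose vertex set $\verts{S}$ has degree multiset $\{1^\ell, d\}$ are exactly these stars; thus $N_{D_\ell}(T)$ is precisely the number of edge sets contributing the monomial $P_1^\ell P_d$ when $\Ga{\ell}$ is expanded as a polynomial in the $P_j$. The task is therefore to extract this contribution from $\Ga{\ell}(x,y)$ as an honest rational function.

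First I would isolate the terms of $\Ga{\ell}$ with the maximal possible power of $P_1$. Since every leaf lies on a unique edge, an $\ell$-edge set $S$ contributes a power $P_1^{m_1(S)}$ with $m_1(S) \leq \ell$, and equality holds precisely when all $\ell$ edges of $S$ are leaf-edges. Multiplying by $(1+xy)^\ell$ and letting $x \to -1/y$ kills every term with $m_1(S) < \ell$ while sending $(1+xy)^\ell P_1^\ell \to (-1)^\ell$ and $P_j \to q_j := \frac{-y^{j-1}}{1 - y^{j-1}}$ for $j \geq 2$. The resulting function $F_\ell(y) := \lim_{x \to -1/y}(1+xy)^\ell \Ga{\ell}(x,y)$ is determined by $\G_T$ and equals $(-1)^\ell$ times a sum, over sets of $\ell$ leaf-edges, of $\prod_{w} q_{\deg(w)}$, where $w$ runs over the distinct non-leaf endpoints. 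Organizing such a set by recording, for each non-leaf vertex $w$, which of its leaf-edges are chosen, I would assemble the generating function $G(u,y) := \sum_{\ell \geq 0} (-1)^\ell F_\ell(y)\, u^\ell$ into the product
\[
    G(u,y) = \prod_{w} \Bigl( 1 + q_{\deg(w)}\bigl((1+u)^{\ell(w)} - 1\bigr)\Bigr) = \prod_{d' \geq 2} \prod_{\ell' \geq 0} \Bigl(1 + q_{d'}\bigl((1+u)^{\ell'} - 1\bigr)\Bigr)^{L_{d',\ell'}(T)},
\]
the first product being over non-leaf vertices $w$. This product is determined by $\G_T$ and has constant term $G(0,y) = 1$.

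The key move is to pass to the logarithm, which converts the product over vertices into a sum and thereby linearizes the dependence on the unknowns $L_{d',\ell'}(T)$. Expanding $\log(1+X) = \sum_{r \geq 1}\frac{(-1)^{r-1}}{r}X^r$ gives
\[
    \log G(u,y) = \sum_{d' \geq 2} \sum_{r \geq 1} \frac{(-1)^{r-1}}{r}\, q_{d'}^{\,r}\, A_{d',r}(u), \qquad A_{d',r}(u) = \sum_{\ell' \geq 0} L_{d',\ell'}(T)\,\bigl((1+u)^{\ell'} - 1\bigr)^r,
\]
which is again determined by $\G_T$. Provided the rational functions $\{q_{d'}^{\,r} : d' \geq 2,\ r \geq 1\}$ are linearly independent over $\Q$, I can read off the coefficient of $q_d^{\,1}$ to recover $A_{d,1}(u) = \sum_{\ell'} L_{d,\ell'}(T)\bigl((1+u)^{\ell'} - 1\bigr)$. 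Since the polynomials $(1+u)^{\ell'} - 1$ for $\ell' \geq 1$ are linearly independent, this yields every $L_{d,\ell'}(T)$ with $\ell' \geq 1$, and $L_{d,0}(T) = N_d(T) - \sum_{\ell' \geq 1}L_{d,\ell'}(T)$ follows from the degree sequence.

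The main obstacle is the linear independence of $\{q_{d'}^{\,r}\}$, which I would establish by examining poles at roots of unity. The function $q_{d'}$ has simple poles exactly at the $(d'-1)$-th roots of unity, so $q_{d'}^{\,r}$ has a pole of order $r$ there. In a hypothetical vanishing combination, let $M$ be the largest value of $d' - 1$ that occurs. At a primitive $M$-th root of unity $\zeta$, a function $q_{d'}$ is singular iff $\zeta^{d'-1} = 1$, i.e.\ iff $M \mid (d'-1)$; among the finitely many occurring indices (all with $d'-1 \leq M$) this forces $d' = M+1$, so only the functions $q_{M+1}^{\,r}$ are singular at $\zeta$. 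These have pairwise distinct pole orders at $\zeta$, so their coefficients must vanish, and downward induction on $M$ then gives full linear independence. (The passage from $\Q$-independence to extraction of the $u$-dependent coefficients $A_{d',r}(u) \in \Q[u]$ is routine, reading off one power of $u$ at a time.) This completes the recovery of $L_{d,\ell}(T)$ from $\G_T$.
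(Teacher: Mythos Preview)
Your proof is correct and shares its opening move with the paper---multiplying $\Ga{\ell}$ by $(1+xy)^\ell$ and specializing $x\to -y^{-1}$ to isolate the leaf-edge sets---but from that point the two arguments diverge. The paper next multiplies by $(1-y^{a-1})^k$ and specializes $y$ to a primitive $(a-1)$th root of unity, which forces the surviving non-leaf endpoints to be distinct and to have degrees $\equiv 1 \pmod{a-1}$; this yields the elementary symmetric sums of the multiset $\{\ell(v)/(\deg(v)-1):v\in V^{(a)}\}$, from which the multiset itself is recovered, and a further downward induction on $a$ separates the degrees. You instead package all $\ell$ together into the generating function $G(u,y)$, factor it over non-leaf vertices, and take a logarithm, reducing the problem to the $\Q$-linear independence of the family $\{q_{d'}^{\,r}\}$, which you establish by a pole-order analysis at primitive roots of unity. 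Your route avoids the ``symmetric sums $\Rightarrow$ multiset'' step and the separate degree-by-degree induction, at the cost of proving a slightly stronger linear-independence statement; conversely, the paper's specialization at roots of unity is more hands-on and makes the structure of the surviving terms very explicit. Both arguments ultimately rely on the same analytic fact---that the singularities of $1/(1-y^{d-1})$ at primitive $(d-1)$th roots of unity distinguish the different degrees---but they organize the bookkeeping differently. One minor remark: your opening paragraph sets up the plan of extracting the monomial $P_1^\ell P_d$ directly, but the argument you actually carry out bypasses $N_{D_\ell}$ and recovers the $L_{d,\ell'}$ straight from $A_{d,1}(u)$; this is fine, just slightly at odds with the stated plan.
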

    
\begin{proof}
    Fix a positive integer $k$. Instead of $\Gk(x,y)$, it will be more convenient to make the substitution \[
        \Gk(x^{-1},y^{-1}) = \sum_{\substack{S \subseteq E(T) \\ |S| = k}} \prod_{v \in \verts{S}} \frac{1}{1+xy^{\deg(v)}}.
    \] For each term in the above sum, the number of factors of $1+xy$ in the denominator is at most $k$, with equality if and only if $S \subseteq L(T)$, where $L(T)$ is the set of all edges of $T$ adjacent to a leaf. Therefore, we have \[
        (1+xy)^k \cdot \Gk(x^{-1},y^{-1}) \bigg|_{x = -y^{-1}} = 
        \sum_{\substack{S \subseteq L(T) \\ |S| = k}} \prod_{v \in \nlverts{S}} \frac{1}{1-y^{\deg(v)-1}},
    \] where $\nlverts{S}$ denotes the set of all non-leaf vertices in $\verts{S}$. (Note that $\nlverts{S}$ has size at most $k$.) Now fix an integer $a \ge 2$, and consider the product \[
        (1-y^{a-1})^k \sum_{\substack{S \subseteq L(T) \\ |S| = k}} \prod_{v \in \nlverts{S}} \frac{1}{1-y^{\deg(v)-1}}.
    \] Let $\omega = e^{2\pi i/(a-1)}$, which is a root of $1 - y^{d-1}$ if and only if $(a-1) \mid (d-1)$, in which case it is a simple root. If we set $y = \omega$ in the above product, the only terms that will survive are those for which $\nlverts{S}$ has size $k$ (thus, all the edges of $S$ are disjoint), and every vertex $v \in \nlverts{S}$ satisfies $(a-1) \mid (\deg(v)-1)$. Let $V^{(a)}$ be the set of all such vertices: \[
        V^{(a)} = \{ v \in V(T) : \deg(v) > 1 \; \text{and} \; (a-1) \mid (\deg(v)-1) \}.
    \] Then each surviving term corresponds to a choice of $k$ vertices from $V^{(a)}$ and a leaf edge incident to each chosen vertex. Therefore, setting $y = \omega$ yields \[\begin{aligned}
        (1-y^{a-1})^k \sum_{\substack{A \subseteq V^{(a)} \\ |A|=k}} \prod_{v \in A} \left(\ell(v) \frac{1}{1-y^{\deg(v)-1}} \right)
        \bigg|_{y = \omega}
        &=
        \sum_{\substack{A \subseteq V^{(a)} \\ |A|=k}} \prod_{v \in A} \left( \ell(v) \lim_{y \to \omega} \frac{1-y^{a-1}}{1-y^{\deg(v)-1}} \right)
        \\ &=
        \sum_{\substack{A \subseteq V^{(a)} \\ |A|=k}} \prod_{v \in A} \left( \ell(v) \frac{a-1}{\deg(v)-1} \right)
        \\ &=
        (a-1)^k \sum_{\substack{A \subseteq V^{(a)} \\ |A|=k}} \prod_{v \in A} \frac{\ell(v)}{\deg(v)-1}.
    \end{aligned}\] Ignoring the factor of $(a-1)^k$, this is the $k$th elementary symmetric sum of the multiset \[
        M^{(a)} = \left\{ \frac{\ell(v)}{\deg(v)-1} : v \in V^{(a)} \right\}.
    \] Since $k$ was arbitrary, it follows that $\G_T$ determines the multiset $M^{(a)}$ for each $a \ge 2$.

    We now show how the leaf adjacency sequence of $T$ can be recovered from the multisets $M^{(a)}$. For each $d \ge 2$, let $V_d$ be the set of vertices of $T$ with degree $d$, so $V^{(a)} = \bigcup_{i \ge 1} V_{1 + i(a-1)}$. Let $M_d$ be the multiset \[
        M_d = \left\{ \frac{\ell(v)}{d-1} : v \in V_d \right\},
    \] so $M^{(a)} = \bigsqcup_{i \ge 1} M_{1 + i(a-1)}$. In particular, if $\Delta$ is the largest degree of a vertex of $T$, then $M^{(\Delta)} = M_{\Delta}$, so the multiset $M_\Delta$ is determined by $\G_T$. Then, for $2 \le d < \Delta$, we have \[
        M_d = M^{(d)} \setminus \bigsqcup_{i \ge 2} M_{1 + i(d-1)},
    \] so by an inductive process, we can recover all the $M_d$ from the $M^{(a)}$ and thus from $\G_T$. Now $M_d$ is clearly equivalent to the multiset of numbers $\ell(v)$ for $v \in V_d$, so the $M_d$ together determine the leaf adjacency sequence of $T$, as desired.
\end{proof}

This result is equivalent to Theorem~\ref{thm:degree-embedding-list}\ref{thm-part:leaf-adjacency}. Indeed, let $\star_{d,k}$ be the star with center vertex labeled $d$ and $k$ outer vertices all labeled $1$. Then we have \[
    N_{\star_{d,k}}(T) = \sum_{\ell \ge k} \binom{\ell}{k} L_{d,\ell}(T),
\] so by triangularity, the numbers $(N_{\star_{d,k}}(T))$ and $(L_{d,\ell}(T))$ determine each other.
(Note that $N_{\star_{d,2}}(T) = N_{1,d,1}(T)$, so this recovers another sequence of triple-degree numbers of $T$.)

\section{Higher-order generalized degree polynomials} \label{sec:higher-order}

In this final section, we discuss a possible further generalization of the generalized degree polynomial: the \emph{generalized degree polynomial of order $m$} for positive integers $m$.

To define this polynomial, we first extend the notion of \emph{boundary edge} from Section~\ref{subsec:GDP} as follows. Let $F$ be a forest and let $A_1, \dots, A_m$ be disjoint subsets of $V(F)$. A \emph{boundary edge} of $(A_1, \dots, A_m)$ is an edge of $F$ that is a boundary edge of at least one $A_i$, that is, the set of boundary edges of $(A_1, \dots, A_m)$ is simply \[
    D(A_1, \dots, A_m) = D(A_1) \cup \dotsm \cup D(A_m).
\] We write $d(A_1, \dots, A_m) = | D(A_1, \dots, A_m)|$. Thus, $d(A_1, \dots, A_m) \le d(A_1) + \dots + d(A_m)$.

\begin{defn}
    The \emph{generalized degree polynomial of order $m$} of a forest $F$ is \[
        \G^{(m)}_F(x_1, \dots, x_m, y, z_1, \dots, z_m)
        = \sum_{A_1 \sqcup \dotsm \sqcup A_m \subseteq V(F)}
        x_1^{|A_1|} \dotsm x_m^{|A_m|}
        \cdot
        y^{d(A_1, \dots, A_m)}
        \cdot
        z_1^{e(A_1)} \dotsm z_m^{e(A_m)},
    \]
where the sum is over tuples $(A_1, \dots, A_m)$ of disjoint subsets of $V(F)$. 
\end{defn}
Note that $\G^{(1)}_F$ is equivalent to the usual generalized degree polynomial $\G_F$. As in Section~\ref{subsec:GDP}, we will show that $\G^{(m)}_F$ is a linear function of $\X_F$. To do this, we first consider a variant of $\X_F$ itself, the \emph{bad chromatic symmetric function} $\B_F$, and show that in fact it is equivalent to $\X_F$. We will then show how $\B_F$ naturally gives rise to $\G^{(m)}_F$.

\begin{defn}
    For a forest $F$, the \emph{bad chromatic symmetric function} of $F$ is
    \[\B_F(x_1, x_2, \dots; z_1, z_2, \dots) = \sum_\kappa \prod_i x_i^{|\kappa^{-1}(i)|} z_i^{b_{\kappa}(i)},\]
    where $\kappa \colon V(F) \to \Zpos$ is any vertex coloring (not necessarily proper) of $F$, and $b_{\kappa}(i)$ is the number of edges $\{v,w\} \in E(F)$ such that $\kappa(v)=\kappa(w)=i$.
\end{defn}
(This can be seen as a further generalization of Stanley's symmetric function generalization of the bad coloring polynomial by setting $z_i = 1+t$; see \cite[Section 3.1]{stanley2}.)

If we let $A_i$ denote the set of vertices colored $i$, then we can alternatively write
\[\B_F(x_1, x_2, \dots; z_1, z_2, \dots) = \sum_{A_1 \sqcup A_2 \sqcup \cdots = V(F)} \prod_i x_i^{|A_i|} z_i^{e(A_i)},\]
where the sum ranges over all ways to express $V(F)$ as a disjoint union of (possibly empty) sets $A_i$.
We denote by $\B^{(m)}_F$ the polynomial in variables $x_1, \dots, x_m, z_1, \dots, z_m$ obtained by setting $x_i = z_i = 0$ in $\B_F$ for $i > m$. 
% Thus \[
%     \B^{(m)}_F(x_1, \dots, x_m; z_1, \dots, z_m) = \sum_{A_1 \sqcup \cdots \sqcup A_m = V(F)} \prod_i x_i^{|A_i|} z_i^{e(A_i)}.
% \] 
Observe that $\B_F$ can be recovered as an inverse limit over the polynomials $\B^{(m)}_F$ because any particular monomial of $\B_F$ appears in $\B^{(m)}_F$ for all sufficiently large $m$.

As a formal power series, $\B_F$ is a \emph{MacMahon symmetric function}, as defined in \cite{ROSAS2001326}: that is, it is invariant under the diagonal action of the symmetric group of the positive integers simultaneously permuting the $x$-variables and the $z$-variables. In addition, setting all $z_i = 0$ in $\B_F$ recovers the chromatic symmetric function $\X_F$. Note also that if $A_i$ is a nonempty subset of $V(F)$ inducing a connected subgraph of $F$, then $e(A_i) = |A_i|-1$. Thus, for any partition $\l \vdash |F|$, the coefficient of $\prod_i x_i^{\l_i} z_i^{\l_i-1}$ in $\B_F$ is (up to a factor corresponding to permuting equal parts of $\l$) exactly $b_\l(F)$, the number of connected partitions of $F$ of type $\l$. It follows that one can also recover the expansion of $\X_F$ in the power sum basis from $\B_F$.

While it might seem that $\B_F$ contains strictly more information than $\X_F$, the following proposition shows that $\X_F$ determines $\B^{(m)}_F$ for each $m$, so $\B_F$ and $\X_F$ are in fact equivalent.

\begin{proposition} \label{prop:B-from-X}
    For each $m$, there exists a $\Q$-algebra map $\beta^{(m)}$ such that $\beta^{(m)}(\X_F) = \B_F^{(m)}$ for all forests $F$.
\end{proposition}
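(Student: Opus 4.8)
The plan is to imitate the proof of Proposition~\ref{prop:GT-linear}, replacing the two-fold convolution used there with an $m$-fold convolution of the maps $\varphi_{t,u}$ supplied by Lemma~\ref{lemma:phi}. Concretely, I would set
\[
    \beta^{(m)} = \varphi_{x_1, z_1} * \varphi_{x_2, z_2} * \dotsm * \varphi_{x_m, z_m},
\]
where each $\varphi_{x_i, z_i}$ is regarded as a $\Q$-linear map into the common target algebra $E = \Q[x_1, \dots, x_m, z_1, \dots, z_m]$. Since convolution of linear maps into a $\Q$-algebra is again $\Q$-linear, $\beta^{(m)}$ is a $\Q$-linear map $\Sym \to E$, as required.

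The first genuine step is to upgrade the two-fold convolution formula associated with \eqref{eq:CSF-Hopf-map} to an $m$-fold version. Because the comultiplication $\Delta$ is coassociative, convolution is associative, so $\beta^{(m)}$ is well defined, and an induction on $m$ gives
\[
    (f_1 * \dotsm * f_m)(\X_F) = \sum_{A_1 \sqcup \dotsm \sqcup A_m = V(F)} \prod_{i=1}^m f_i(\X_{F|A_i})
\]
for any linear maps $f_1, \dots, f_m \colon \Sym \to E$, the sum ranging over ordered partitions of $V(F)$ into $m$ possibly-empty blocks. In the inductive step one writes $f_1 * (f_2 * \dotsm * f_m)$, applies the two-fold formula, and uses the identity $(F|B)|A_i = F|A_i$ for $A_i \subseteq B$ to splice the inner sum into the outer one.

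Applying this with $f_i = \varphi_{x_i, z_i}$ and invoking \eqref{eq:phi_tu} to evaluate $\varphi_{x_i, z_i}(\X_{F|A_i}) = x_i^{|A_i|} z_i^{e(A_i)}$ — here using that $|F|A_i| = |A_i|$ and that the internal edges of $A_i$ are exactly the edges of $F|A_i$ — yields
\[
    \beta^{(m)}(\X_F) = \sum_{A_1 \sqcup \dotsm \sqcup A_m = V(F)} \prod_{i=1}^m x_i^{|A_i|} z_i^{e(A_i)},
\]
which is precisely the second expression for $\B^{(m)}_F$ given in the text (the restriction to colors $1, \dots, m$ being exactly what setting $x_i = z_i = 0$ for $i > m$ accomplishes). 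Empty blocks cause no difficulty, since $\X_{F|\varnothing} = 1$ and $\varphi_{x_i, z_i}(1) = 1$, matching the trivial factor $x_i^0 z_i^0 = 1$.

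I expect no serious obstacle: the only real work is the bookkeeping for the iterated convolution formula and confirming that the induced-subgraph construction is compatible with nested subsets. The algebraic input (associativity of convolution from coassociativity of $\Delta$) is standard, and everything else is immediate from Lemma~\ref{lemma:phi}.
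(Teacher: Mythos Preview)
Your proposal is correct and follows essentially the same approach as the paper: define $\beta^{(m)} = \varphi_{x_1,z_1} * \dotsm * \varphi_{x_m,z_m}$, iterate \eqref{eq:CSF-Hopf-map} to obtain the sum over ordered set partitions $A_1 \sqcup \dotsm \sqcup A_m = V(F)$, and apply Lemma~\ref{lemma:phi} to each factor. Your write-up is slightly more explicit about the bookkeeping (associativity of convolution, nested induced subgraphs, empty blocks), but the argument is the same.
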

\begin{proof}
    Recall the functions $\varphi_{t,u}$ from Lemma~\ref{lemma:phi}. We consider the $m$-fold convolution
    \[\beta^{(m)} = \varphi_{x_1, z_1} * \cdots * \varphi_{x_m, z_m}.\]
    For a forest $F$, iterating \eqref{eq:CSF-comult} yields \[
    \Delta^{m-1}(\X_F) = \sum_{A_1 \sqcup \dotsm \sqcup A_m = V(F)} \X_{F|A_1} \tensor \dotsm \tensor \X_{F|A_m},\]
    where $A_1, \dots, A_m$ partition the vertices of $F$. Therefore, we have
    \[\begin{aligned}
    \beta^{(m)}(\X_F)
    &= \sum_{A_1 \sqcup \dotsm \sqcup A_m = V(F)}
        \varphi_{x_1, z_1}(\X_{F|A_1})
        \, \dotsm \,
        \varphi_{x_m, z_m}(\X_{F|A_m})
    \\ &= \sum_{A_1 \sqcup \dotsm \sqcup A_m = V(F)}
        x_1^{|A_1|} \dotsm x_m^{|A_m|}
        \cdot
        z_1^{e(A_1)} \dotsm z_m^{e(A_m)},
\end{aligned}\]
which is precisely $\B_F^{(m)}$.
\end{proof}

We now show that $\G^{(m)}_F$ is essentially equivalent to $\B_F^{(m+1)}$. Note that if $A_1 \sqcup \cdots \sqcup A_{m+1} = V(F)$, then the boundary edges of $(A_1, \dots, A_m)$ are precisely the edges whose endpoints lie in different subsets $A_i$ and $A_j$ for some $1 \le i \neq j \le m+1$. Hence 
\[d(A_1, \dots, A_m) +  \sum_{i=1}^{m+1} e(A_i) = \sum_{i=1}^{m+1} |A_i| - c(F)\] since both sides count the total number of edges in $F$. Therefore
\begin{align*}
    & \B_F^{(m+1)}(x_1y, \dots, x_my, y; \, y^{-1}z_1, \dots, y^{-1}z_m, y^{-1})
    \\ &= \sum_{A_1 \sqcup \dots \sqcup A_m \sqcup A_{m+1} = V(F)} x_1^{|A_1|} \dotsm x_m^{|A_m|} \cdot y^{\sum_{i=1}^{m+1} (|A_i| - e(A_i))} \cdot z_1^{e(A_1)} \dotsm z_m^{e(A_m)}
    \\ &= \sum_{A_1 \sqcup \dots \sqcup A_m \subseteq V(F)} x_1^{|A_1|} \dotsm x_m^{|A_m|} \cdot y^{d(A_1, \dots, A_m) + c(F)} \cdot z_1^{e(A_1)} \dotsm z_m^{e(A_m)}
    \\ &= y^{c(F)} \G^{(m)}_F(x_1, \dots, x_m, y, z_1, \dots, z_m).
\end{align*} 
It follows that there is a $\Q$-algebra map $\gamma^{(m)}$ that sends $\X_F$ to $y^{c(F)} \G^{(m)}_F$. Specifically, the above computations show that we can take \[
    \gamma^{(m)} = \varphi_{x_1y, y^{-1}z_1} * \dotsm * \varphi_{x_my, y^{-1}z_m} * \varphi_{y, y^{-1}}.
\]
(When $m = 1$, this reduces to the map from Proposition~\ref{prop:GT-linear}.) We note that $\gamma^{(m)}$ is multiplicative and satisfies \[
    \gamma^{(m)}(p_n) = x_1^n y (y-z_1)^{n-1} + \dotsm + x_m^n y (y-z_m)^{n-1} + y (y-1)^{n-1}.
\]

\subsection{Future work}

A natural direction for future work is to investigate what information about a tree $T$ is determined by $\G^{(m)}_T$ for each positive integer $m$. In particular, we pose the following question.

\begin{question} \label{q:same-GTm}
    Does there exist a positive integer $m$ such that $\G^{(m)}_T$ distinguishes trees?
\end{question}

As we already know, it is not true that $\G^{(1)}_T$ (i.e., $\G_T$) distinguishes trees; the smallest pair of non-isomorphic trees with the same generalized degree polynomial is depicted in Figure~\ref{fig:same-gdp}. However, we have not been able to find a pair of non-isomorphic trees $T_1$ and $T_2$ with $\G^{(2)}_{T_1} = \G^{(2)}_{T_2}$. A computer search indicates that no such trees exist with $|T_1| = |T_2| \le 23$.

A positive answer to Question~\ref{q:same-GTm} would immediately imply a positive answer to Question~\ref{q:distinguish}. However, the converse does not necessarily hold, that is, it is possible that the chromatic symmetric function distinguishes trees but no single $\G^{(m)}_T$ does. (This is because our method to recover $\X_T$ from $\B_T$ and thus from the $\G^{(m)}_T$ requires $m = |T|-1$, which is unbounded.) If this is indeed the case, then a construction to produce pairs (or larger sets) of non-isomorphic trees with the same $\G^{(m)}_T$ could be valuable in studying Question~\ref{q:distinguish}.

\section{Acknowledgments}

The authors would like to thank Jeremy Martin for helpful discussions and for welcoming the second author to the University of Kansas in November 2025. They also acknowledge the valuable suggestions of the anonymous referees.

The first author was partially supported by a Simons Foundation Travel Support for Mathematicians Award.

\printbibliography

\end{document}